\def\smallddots{\mathinner{\raise7pt\hbox{.}\raise4pt\hbox{.}\raise1pt\hbox{.}}}
\def\smallsdots{\mathinner{\raise1pt\hbox{.}\raise4pt\hbox{.}\raise7pt\hbox{.}}}
\DeclareMathOperator{\diag}{diag}
\numberwithin{equation}{section}
\numberwithin{table}{section}
\newtheorem{theorem}{Theorem}[section]
\newtheorem{algorithm}{Algorithm}[section]
\newtheorem{definition}{Definition}[section]
\newtheorem{remark}{Remark}[section]
\begin{document}

\title{\bf Novel Approach to Real Polynomial Root-finding and Matrix Eigen-solving}
\author{Victor Y. Pan  \\
 Department of Mathematics and Computer Science \\
Lehman College of the City University of New York \\
Bronx, NY 10468 USA \\
 Ph.D. Programs in Mathematics  and Computer Science \\
The Graduate Center of the City University of New York \\
New York, NY 10036 USA \\
 victor.pan@lehman.cuny.edu \\
http://comet.lehman.cuny.edu/vpan/  \\
(This work is supported by NSF Grant CCF 1116736.  \\
Some of its results are to be presented at CASC 2014.)}
 
 \date{}

\maketitle


\maketitle


\begin{abstract}
Univariate polynomial root-finding is both classical and
important for modern computing. Frequently one seeks
just the real roots of a polynomial with real coefficients.
They can be approximated at a low computational cost if the
polynomial has no nonreal roots, but typically nonreal roots are
much more numerous than the real ones. We dramatically accelerate
the known algorithms in this case by exploiting the correlation
between the computations with matrices and polynomials, 
extending the techniques of the matrix sign iteration, and 
 exploiting the structure of the companion matrix of
the input polynomial. We extend some of the proposed techniques 
to the approximation of the real eigenvalues of 
a real nonsymmetric matrix. 
  \end{abstract}


\paragraph{Keywords:}
Polynomials,
Real roots,
Matrices,
Matrix sign iteration,
Companion matrix,
Frobenius algebra,
Square root iteration,
Root squaring,
Real eigenvalues,
Real nonsymmetric matrix

\section{Introduction}
Assume
a univariate polynomial of degree $n$  with
 real coefficients,
\begin{equation}\label{eqpoly}
 p(x)=\sum^{n}_{i=0}p_ix^i=p_n\prod^n_{j=1}(x-x_j),~~~ p_n\ne 0,
\end{equation}
which  has $r$ real roots 
  $x_1,\dots,x_r$ and $s=(n-r)/2$
pairs of nonreal complex conjugate roots.
In some applications, e.g., to algebraic and geometric optimization,
one seeks just the $r$ real roots, which 
make up just a small fraction of all roots. 
 This is a well studied subject (see
 \cite[Chapter 15]{MP13}, \cite{PT13}, \cite{SMa},
and the  bibliography therein), but we 
dramatically accelerate the known algorithms 
by combining and extending the techniques
of \cite{PZ11} and \cite{PQZ12}. At first our iterative Algorithm
\ref{alg1}
reduces the original problem of real root-finding
to the same problem for an auxiliary
polynomial of degree $r$ having  $r$ real roots. 
Our iterations converge with quadratic 
rate, and so we need only $k=O(b+d)$ iterations, assuming  the tolerance $2^{-b}$ to the
error norm of the approximation to the auxiliary polynomial (we denote it $v_k(x)$) and the minimal distance
$2^{-d}$ of the nonreal roots from the real axis. The
values $d$ and $k$ are large for the input polynomials with nonreal
roots lying very close to the real axis, but our 
 techniques of Remark \ref{renrlr} enable us to
handle such harder inputs as well.
The known algorithms approximate the  roots of
 $v_k(x)$ 
 at a low arithmetic cost, and 
having these approximations computed, we recover the $r$
 real roots of the input polynomial $p(x)$.
Overall we perform $O(kn\log (n))$
 arithmetic operations. 
This arithmetic cost bound  is quite low, 
but in the case of large degree $n$, the algorithm is prone to
numerical problems, and so we devise dual
Algorithms
\ref{alg2} and \ref{alg2a} to avoid the latter problems.
This works quite well according to our test results, 
but formal
study of the issue and of the Boolean complexity of the
algorithm is left as a research challenge.

Let us comment briefly on the techniques involved and the complexity of the 
latter algorithms.
They perform computations in the Frobenius
matrix algebra generated by the companion matrix of the input
polynomial. By using FFT  and exploiting the structure of the matrices in the
algebra, one can operate with them as fast as with
polynomials. Real polynomial root-finding is
reduced to real eigen-solving for the companion
matrix.  
Transition to matrices and 
the randomization techniques, extended from  \cite[Section
5]{PQZ12},
streamline and simplify
the iterative process of Algorithm
\ref{alg1}. Now this process outputs an auxiliary $r\times r$
matrix $L$ whose eigenvalues are real and approximate  the  $r$
real eigenvalues of
 the companion matrix. 
It remains to apply the QR algorithm to the matrix $L$, 
 at the arithmetic cost $O(r^3)$ (cf. \cite[page 359]{GL96}),
 dominated if $r^3=O(kn\log (n))$.

The algorithm can be immediately applied to approximating 
all the real eigenvalues of a real nonsymmetric $n\times n$ matrix
by using $O(kn^3)$ arithmetic operations. 
We point out a direction to potential decrease of this complexity 
bound to $O((k+n)n^2)$ by means of similarity transformation to
 rank structured representation. 
Maintaining such representation would require additional research,
however, and we propose a distinct novel algorithm. It 
approximates all real eigenvalues of an $n\times n$
matrix by using $O(mn^2)$
arithmetic operations (see Algorithm \ref{alg6}),
and this bound decreases to $O(mn)$
for the companion and various generalized companion matrices.
Here $m$ denotes the number of iterations required for
the convergence
of the basic iteration of the algorithm.
Generally this number grows versus Algorithm
\ref{alg2} but remains reasonable for a large class of input matrices.

We engage, extend, and combine the number of efficient
methods
available for complex polynomial root-finding, particularly
the ones of  \cite{PZ11} and \cite{PQZ12},
but we also propose new techniques  and
 employ some old methods in novel and nontrivial ways.
 Our Algorithm
\ref{alg1} streamlines and substantially modifies
 \cite[Algorithm 9.1]{PZ11}
by avoiding the stage of root-squaring and the application of the
Cayley map. Some
techniques of Algorithm \ref{alg2} are implicit in \cite[Section
5]{PQZ12}, but we specify a  distinct iterative process, 
employ the Frobenius matrix algebra, extend the matrix sign iteration
to real eigen-solving, employ randomization and the QR algorithm,
 and include the initial acceleration by
scaling.
Our Algorithm \ref{alg3} naturally
extends Algorithms \ref{alg1} and \ref{alg2}, but we prove that
this extension is prone to the problems of numerical stability,
and our finding can be applied to 
the similar iterations of
\cite{BP96} and
\cite{C96} as well.
Algorithm \ref{alg4} can be linked to Algorithm \ref{alg1} 
and hence to \cite[Section 5]{PQZ12}, but 
incorporates some novel promising techniques.
Our simple recipe for real root-finding by means of combining the
root radii algorithm with Newton's iteration in Algorithm
\ref{algrrnt}  and even the extension of our  Algorithm \ref{alg2} to the
approximation of real eigenvalues of a real
nonsymmetric matrix are also  novel and 
promising.
Some of our algorithms take advantage of combining
the power of operating with matrices and polynomials (see Remarks
\ref{refr} and \ref{refr1}).
 Finding their deeper
  synergistic combinations 
is another natural research challenge, traced back to
\cite{P92} and \cite{BP94}.
 Our coverage of the complex plane geometry and
various rational transformations of the variable and 
the roots 
 can be of  independent interest.


Hereafter ``flops" stands for ``arithmetic operations",
``lc$(p)$" stands for ``the leading coefficient of $p(x)$".
$D(X, r)=\{x: |x-X|\le r\}$ and $C(X, r)=\{x: |x-X|=r\}$ denote
 a disc and a circle  on the complex plane, respectively.
We  write $||\sum_i v_ix^i||_q=(\sum_i|v_i|^q)^{1/q}$ for $q=1,2$
and $||\sum_i v_ix^i||_{\infty}=\max_i|v_i|$.
A function is in $\tilde O(f(bc))$ if it is in $O(f(bc))$
up to polylogarithmic factors in $b$ and $c$.
 agcd$(u,v)$ denotes an {\em approximate
greatest common divisor} of two polynomials
$u(x)$ and $v(x)$ (see \cite{BB10}
on definitions and algorithms).


\section{Some Basic Results for Polynomial Computations}\label{s2}


\subsection{Mappings of the Variables and the Roots}\label{smpg}


Some important maps of the roots of a polynomial can be computed
at a linear or nearly linear cost.
 
\begin{theorem}\label{thshsc} ({\em  Root Inversion, Shift and Scaling}, cf. \cite{P01}.)
 Given a polynomial $p(x)$ of (\ref{eqpoly}) and two scalars $a$ and $b$, we can
compute the coefficients of the polynomial $q(x)=p(ax+b)$ by using
$O(n \log (n))$ flops. We need only $2n-1$ flops if $b=0$.
Reversing a polynomial inverts all its roots involving no flops,
that is, $p_{\rm rev}(x)=x^np(1/x)=\sum_{i=0}^np_ix^{n-i}=p_n\prod_{j=1}^n(1-xx_j)$.
\end{theorem}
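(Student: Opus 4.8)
The plan is to establish each of the three claims separately, since they are essentially independent.

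First I would handle the shift-and-scaling claim $q(x) = p(ax+b)$. The cleanest route is to reduce it to two elementary operations: the pure shift $p(x) \mapsto p(x+b)$ (a Taylor shift) and the pure scaling $p(x) \mapsto p(ax)$. For the pure scaling, $p(ax) = \sum_{i=0}^n (p_i a^i) x^i$, so one only needs the powers $a^0, a^1, \dots, a^n$ (computed incrementally with $n-1$ multiplications) and then $n$ coefficientwise multiplications, giving the $2n-1$ flop bound claimed for the case $b=0$. For the pure shift, the standard fact is that the Taylor coefficients of $p$ at $b$ can be computed via a single polynomial multiplication: writing $p(x+b) = \sum_i c_i x^i$, one has the identity relating the $c_i$ to a convolution of the sequence $(p_j)$ against the sequence $(b^{k}/k!)$ (equivalently, $p(x+b)$ is obtained from $\mathrm{rev}$-type manipulations plus one multiplication of two polynomials of degree $n$). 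Any such convolution costs $O(n\log n)$ via FFT. Composing: $q(x) = p(ax+b)$ is the pure shift by $b$ followed by pure scaling by $a$ (or scaling then shifting by $b/a$), so the total is $O(n\log n)$. I would cite \cite{P01} for the precise classical algorithm rather than re-deriving the convolution identity.

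Second, the reversal claim is purely formal: from $p(x) = p_n\prod_{j=1}^n (x-x_j)$ one computes
\[
x^n p(1/x) = x^n \sum_{i=0}^n p_i x^{-i} = \sum_{i=0}^n p_i x^{n-i},
\]
which exhibits the coefficient of $x^{n-i}$ as $p_i$, i.e. $p_{\mathrm{rev}}$ is just the coefficient vector of $p$ read in reverse order — no arithmetic at all. For the product form, substitute: $x^n p(1/x) = x^n p_n \prod_{j=1}^n (1/x - x_j) = p_n \prod_{j=1}^n x(1/x - x_j) = p_n\prod_{j=1}^n (1 - x x_j)$, which makes visible that the roots of $p_{\mathrm{rev}}$ are $1/x_j$ (assuming $x_j \neq 0$; if some $x_j = 0$ the degree drops and that root goes to infinity, which is the natural convention and can be noted in passing).

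**The main obstacle** — really the only non-routine point — is the $O(n\log n)$ bound for the Taylor shift $p(x)\mapsto p(x+b)$; everything else is bookkeeping. The key step there is recognizing that the shift is a single convolution: if one naively applied Horner-type synthetic division $n$ times one would get $O(n^2)$, so the argument must invoke the convolution reformulation and FFT-based fast multiplication. I would present this as the heart of the proof, state the convolution identity explicitly, invoke FFT for the $O(n\log n)$ multiplication cost, and then note that the scaling step adds only $O(n)$ more, completing the bound for general $a,b$ and the sharper $2n-1$ count when $b=0$.
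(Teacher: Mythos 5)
Your proposal is correct and follows exactly the route the paper delegates to its citation of \cite{P01}: the paper states Theorem \ref{thshsc} without proof, and the standard argument is precisely your decomposition into (a) the formal reversal identity, (b) scaling via the precomputed powers $a,a^2,\dots,a^n$ giving the $2n-1$ flop count, and (c) the Taylor shift $p(x)\mapsto p(x+b)$ rewritten as a single convolution of the weighted coefficient sequences $(p_j\, j!)$ and $(b^m/m!)$, evaluated by one FFT-based polynomial multiplication in $O(n\log(n))$ flops. Nothing is missing; writing out that convolution identity explicitly, as you indicate you would, is the only detail left to fill in.
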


\begin{theorem}\label{thdnd} ({\em Root Squaring}, cf. \cite{H59}.)
(i) Let a polynomial $p(x)$ of (\ref{eqpoly}) be monic.
Then $q(x)=(-1)^np(\sqrt{x})p(-\sqrt{x})=\prod_{j=1}^n(x-x_{j}^2)$, and (ii) one can
evaluate $p(x)$ at the $k$-th roots of unity for $k>2n$ and then
interpolate to $q(x)$ by using $O(n\log (n))$ flops.
\end{theorem}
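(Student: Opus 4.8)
The plan is to verify part~(i) by a direct factorization argument, and then obtain part~(ii) as a standard consequence of fast evaluation–interpolation via FFT. For part~(i), I would start from the product form $p(x)=\prod_{j=1}^n(x-x_j)$ guaranteed by monicity in (\ref{eqpoly}) (with $p_n=1$). Substituting $\sqrt{x}$ and $-\sqrt{x}$ gives $p(\sqrt{x})=\prod_{j=1}^n(\sqrt{x}-x_j)$ and $p(-\sqrt{x})=\prod_{j=1}^n(-\sqrt{x}-x_j)=(-1)^n\prod_{j=1}^n(\sqrt{x}+x_j)$. Multiplying the two and collecting the sign, $(-1)^np(\sqrt{x})p(-\sqrt{x})=\prod_{j=1}^n(\sqrt{x}-x_j)(\sqrt{x}+x_j)=\prod_{j=1}^n(x-x_j^2)$, which is the claimed polynomial $q(x)$ in the variable $x$. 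I would also note that this expression is genuinely a polynomial in $x$ (no odd powers of $\sqrt{x}$ survive): writing $p(x)=p_{\mathrm{even}}(x^2)+x\,p_{\mathrm{odd}}(x^2)$, one checks $p(\sqrt{x})p(-\sqrt{x})=p_{\mathrm{even}}(x)^2-x\,p_{\mathrm{odd}}(x)^2$, so the $\sqrt{x}$-ambiguity is immaterial and $q(x)=(-1)^n\bigl(p_{\mathrm{even}}(x)^2-x\,p_{\mathrm{odd}}(x)^2\bigr)$ is well defined of degree $n$.

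For part~(ii), the key observation is that $\deg q=n$, so $q$ is determined by its values at any $k>n$ distinct points; choosing $k>2n$ powers of unity is more than enough and is the convenient choice for FFT. The algorithm is: evaluate $p$ at the $k$-th roots of unity $\omega^0,\dots,\omega^{k-1}$ (one forward FFT, $O(k\log k)$ flops), from these form $p(\omega^t)p(-\omega^t)$ for each $t$ — here I would use that $-\omega^t=\omega^{t+k/2}$ when $k$ is even, so the needed values $p(-\omega^t)$ already appear among the computed $p(\omega^\ell)$, and the pointwise products cost $O(k)$ flops — multiply by $(-1)^n$, and then interpolate through these $k$ values to recover the coefficients of $q$ (one inverse FFT, $O(k\log k)$ flops). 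Since we may take $k$ to be a power of $2$ with $2n<k\le 4n$, the total is $O(n\log n)$ flops.

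I do not expect a serious obstacle here; the only point requiring a little care is the well-definedness of the right-hand side of part~(i) as a polynomial in $x$ despite the formal $\sqrt{x}$, which the even/odd decomposition settles cleanly, and the indexing detail in part~(ii) that $p(-\omega^t)$ is recovered for free from the same FFT by the relation $-\omega^t=\omega^{t+k/2}$. Everything else is the routine cost accounting for two FFTs of length $O(n)$.
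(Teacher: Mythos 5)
Your proposal is correct and follows exactly the classical Dandelin--Graeffe argument that the paper defers to its citation of \cite{H59}: the telescoping factorization $(\sqrt{x}-x_j)(\sqrt{x}+x_j)=x-x_j^2$ for part (i) and FFT-based evaluation--interpolation for part (ii). The only cosmetic point is that your $k$ pointwise products $p(\omega^t)p(\omega^{t+k/2})$ yield each value of $q$ twice, so you really interpolate from the $k/2>n$ distinct values of $q$ at the $(k/2)$-th roots of unity via an inverse FFT of length $k/2$ --- which is precisely why the theorem requires $k>2n$.
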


Recursive root-squaring is prone to numerical problems because the coefficients of the iterated polynomials very quickly span many orders of magnitude.  One can counter this deficiency
by using a special tangential representation of the coefficients and intermediate results (cf. \cite{MZ01}).

\begin{theorem}\label{thcl} ({\em Cayley Maps.})
The maps
$y=(x -\sqrt{-1})/(x+\sqrt{-1})$
and $x=\sqrt{-1}(y +1)/(y-1)$ send the real axis $\{x:~x~{\rm is~real}\}$
onto the unit circle $C(0,1)=\{y:~|y|=1\}$,  and vice versa.
\end{theorem}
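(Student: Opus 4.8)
The plan is to verify the claimed correspondence directly by the standard method for checking that a Möbius transformation maps one circle/line to another: it suffices to confirm that the two maps are inverse to each other and then to check the image of three points (since a Möbius transformation is determined by its action on three points, and it sends circles and lines to circles and lines). First I would check the algebraic identity that the two maps are mutually inverse: substituting $x=\sqrt{-1}(y+1)/(y-1)$ into $y=(x-\sqrt{-1})/(x+\sqrt{-1})$ and simplifying should return $y$, and symmetrically. This is a routine cross-multiplication: with $x+\sqrt{-1} = \sqrt{-1}\bigl((y+1)+(y-1)\bigr)/(y-1) = 2\sqrt{-1}\,y/(y-1)$ and $x-\sqrt{-1} = \sqrt{-1}\bigl((y+1)-(y-1)\bigr)/(y-1) = 2\sqrt{-1}/(y-1)$, the ratio is exactly $y$.

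Next I would establish the image of the real axis. The clean way is to observe that for real $x$, the numerator $x-\sqrt{-1}$ and denominator $x+\sqrt{-1}$ are complex conjugates of each other, so their quotient has modulus $1$; hence $y$ lies on $C(0,1)$. To see that the map is onto $C(0,1)$, I would invoke the inverse map: given $|y|=1$ with $y\neq 1$, a short computation shows $x=\sqrt{-1}(y+1)/(y-1)$ is real — e.g. because $(y+1)/(y-1)$ is purely imaginary when $|y|=1$ (multiply numerator and denominator by $\bar y = 1/y$ to get $(y+1)/(y-1) = (1+\bar y)/(1-\bar y) = \overline{(1+y)/(1-y)}\cdot(-1)$, exhibiting it as its own negative conjugate, hence purely imaginary), and multiplying a purely imaginary number by $\sqrt{-1}$ gives a real number. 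The single excluded point $y=1$ corresponds to $x=\infty$, which is the standard caveat for Möbius maps between a line and a circle; I would either mention this or phrase the statement projectively.

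Alternatively, and perhaps more in the spirit of a ``cf.'' remark, I would just evaluate the forward map at three convenient real points — say $x=0\mapsto (-\sqrt{-1})/(\sqrt{-1}) = -1$, $x=1\mapsto (1-\sqrt{-1})/(1+\sqrt{-1}) = -\sqrt{-1}$, and $x\to\infty\mapsto 1$ — all of which lie on $C(0,1)$, and cite the fact that a Möbius transformation carries the unique circle-or-line through three points to the unique circle-or-line through their images; since the three images are not collinear, that image must be $C(0,1)$. The ``vice versa'' direction then follows automatically from the established fact that the two maps are inverse to one another.

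I do not anticipate a genuine obstacle here; the only subtlety worth flagging is the point at infinity / the excluded point $y=1$, so the cleanest exposition keeps track of that exceptional pair and otherwise proceeds by the two-line inverse-map computation plus the modulus-of-a-conjugate-ratio observation. Everything else is elementary arithmetic with $\sqrt{-1}$, and I would present it tersely, matching the ``cf.'' tone of the surrounding theorems.
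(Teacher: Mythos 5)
The paper gives no proof of this theorem at all---it is stated as a classical fact---so there is nothing to compare against except correctness. Your two core observations are the right ones and do establish the claim: for real $x$ the numerator $x-\sqrt{-1}$ and denominator $x+\sqrt{-1}$ are conjugates, so $|y|=1$; and for $|y|=1$, $y\neq 1$, the quotient $(y+1)/(y-1)$ equals the negative of its own conjugate, hence is purely imaginary, so $\sqrt{-1}\,(y+1)/(y-1)$ is real. Together with the standard remark about the exceptional pair $x=\infty \leftrightarrow y=1$, this proves both directions.

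However, there is a concrete arithmetic error in your ``mutually inverse'' step, and your own displayed computation exposes it: with $x-\sqrt{-1}=2\sqrt{-1}/(y-1)$ and $x+\sqrt{-1}=2\sqrt{-1}\,y/(y-1)$, the ratio $(x-\sqrt{-1})/(x+\sqrt{-1})$ is $1/y$, not $y$. So the two maps displayed in the theorem are \emph{not} inverse to one another; their composition is $y\mapsto 1/y$ (equivalently, $x=\sqrt{-1}(y+1)/(y-1)$ is the inverse of $y=(x+\sqrt{-1})/(x-\sqrt{-1})$, the reciprocal of the stated forward map). This does not sink the theorem or your argument: $y\mapsto 1/y$ maps $C(0,1)$ onto itself and $x\mapsto -x$ maps the real line onto itself, so surjectivity of each map onto its claimed target still follows from your purely-imaginary computation; and your direct modulus argument for the forward map never used invertibility. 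But as written, the claim ``the ratio is exactly $y$'' is false, and any reader who checks it will stumble. Either drop the mutual-inverse claim and argue each direction separately (your two observations already do this), or correct the composition to $1/y$ and note explicitly that inversion preserves the unit circle.
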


\begin{theorem}\label{thmb} ({\em M{\"o}bius Maps.})
(i) The maps $\widehat y=\frac{1}{2}(\widehat x+1/\widehat x)$, 
$\widehat x=\widehat y\pm \sqrt{\widehat y^2-1}$ 
and $y=\frac{1}{2}(x-1/x)$, $x=y\pm \sqrt{y^2+1}$ send
the unit circle $C(0,1)=\{x:~|x=1|\}$ into the line intervals
$[-1,1]=\{\widehat y:~\Im \widehat y=0,~-1\le \widehat y\le 1\}$
and $[-\sqrt{-1},\sqrt{-1}]=\{y:~\Re y=0,~-1\le y\sqrt{-1}\le 1\}$, and vice versa. (ii) Write
$\widehat y=\frac{1}{2}(\widehat x+1/\widehat x)$, 
$\widehat y_{j}=\frac{1}{2}(\widehat x_j+1/\widehat x_j)$,
$y=\frac{1}{2}(x-1/x)$, and  $y_{j}=\frac{1}{2}(x_j-1/x_j)$, for
$j=1,\dots,n$. Then 
$\widehat q(\widehat y)=p(\widehat x)p(1/\widehat x)=
\widehat q_{n}\prod_{j=1}^n(\widehat y-\widehat y_{j})$ (cf.
\cite[equation (14)]{BP96})
and $q(y)=p(x)p(-1/x)=q_n\prod_{j=1}^n(y-y_{j})$. (iii) Given a polynomial $p(x)$ of
(\ref{eqpoly}), one can interpolate to the polynomials
$\widehat q(y)$ and $q(y)$ by using $O(n\log(n))$ flops.
\end{theorem}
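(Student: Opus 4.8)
\medskip
\noindent\textbf{Proof proposal.}
The plan is to reduce parts (i) and (ii) to the Joukowski identity $\widehat x+1/\widehat x=2\widehat y$ (respectively $x-1/x=2y$), and part (iii) to one FFT followed by a discrete cosine/sine transform. For (i), parametrize $C(0,1)$ by $\widehat x=\exp(\sqrt{-1}\,\theta)$, so that $\widehat y=\frac12(\widehat x+1/\widehat x)=\cos\theta$ fills $[-1,1]$; conversely, for $\widehat y\in[-1,1]$ the roots $\widehat x=\widehat y\pm\sqrt{\widehat y^2-1}$ of $\widehat x^2-2\widehat y\,\widehat x+1=0$ have product $1$ and satisfy $|\widehat x|^2=\widehat y^2+(1-\widehat y^2)=1$ since $\widehat y^2-1\le0$, so they lie on $C(0,1)$. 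The same argument applied to $x^2-2yx-1=0$ (root product $-1$, and $y^2+1\ge0$ on the imaginary segment) handles $y=\frac12(x-1/x)$, $x=y\pm\sqrt{y^2+1}$, and $[-\sqrt{-1},\sqrt{-1}]$. The only subtle point is the two-valued square root, which is harmless: its two branches are exactly $\widehat x$ and $1/\widehat x$ (respectively $x$ and $-1/x$), both on the circle.

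For (ii), I would compute directly, assuming first $p_0\ne0$ (roots of $p$ at the origin cancel in the products below and can be removed beforehand): for each root $x_j$,
\[
(\widehat x-x_j)(1/\widehat x-x_j)=1+x_j^2-2x_j\widehat y=-2x_j(\widehat y-\widehat y_j),\quad \widehat y_j=\tfrac12(x_j+1/x_j),
\]
and likewise $(x-x_j)(-1/x-x_j)=x_j^2-1-2x_jy=-2x_j(y-y_j)$ with $y_j=\tfrac12(x_j-1/x_j)$. Taking products over $j$ turns $p(\widehat x)p(1/\widehat x)$ and $p(x)p(-1/x)$ into genuine degree-$n$ polynomials in $\widehat y$ and $y$ with the stated roots and with leading coefficient $p_n^2(-2)^n\prod_j x_j=2^np_0p_n$ (using $p_n\prod_j x_j=(-1)^np_0$). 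Polynomiality can also be seen without factoring, via Theorem \ref{thshsc}: $p(\widehat x)p(1/\widehat x)=\widehat x^{-n}p(\widehat x)p_{\rm rev}(\widehat x)$, and $p\,p_{\rm rev}$ is self-reciprocal of degree $2n$, hence a polynomial in $\widehat x+1/\widehat x=2\widehat y$.

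For (iii), I would follow the evaluate--interpolate pattern of Theorem \ref{thdnd}(ii), but evaluate at Joukowski images of roots of unity to avoid pointwise square roots. Fix a power of two $k$ with $2n<k\le4n$, set $\zeta=\exp(2\pi\sqrt{-1}/k)$, and compute $p(\zeta^0),\dots,p(\zeta^{k-1})$ by a single FFT in $O(n\log n)$ flops. Then $\widehat q\big(\tfrac12(\zeta^m+\zeta^{-m})\big)=p(\zeta^m)p(\zeta^{-m})$ and $q\big(\tfrac12(\zeta^m-\zeta^{-m})\big)=p(\zeta^m)p(-\zeta^{-m})=p(\zeta^m)p(\zeta^{k/2-m})$, so from the FFT output one reads off the values of the degree-$n$ polynomials $\widehat q$ and $q$ at the $k/2+1>n$ distinct nodes $\cos(2\pi m/k)$, respectively $\sqrt{-1}\sin(2\pi m/k)$, at no extra asymptotic cost; recovering their coefficients from these values is a discrete cosine, respectively sine, transform, again $O(n\log n)$ flops, for the claimed total.

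The step I expect to be the main obstacle is part (iii): making the passage from $k$-th roots of unity to the Chebyshev-type nodes $\cos(2\pi m/k)$ and $\sqrt{-1}\sin(2\pi m/k)$ precise — counting the distinct nodes correctly and checking that interpolation through them remains an FFT-speed transform, i.e. that the change of variable $\widehat x\mapsto\widehat y$ intertwines the DFT with a cosine/sine transform. Parts (i) and (ii) are then routine, the only caveats being the branch bookkeeping in (i) and the harmless $p_0\ne0$ assumption in (ii).
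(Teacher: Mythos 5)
Your argument is correct and follows essentially the same route as the paper, which simply delegates parts (ii) and (iii) to \cite[Section 2]{BP96} and to the fast Chebyshev evaluation/interpolation algorithms of \cite{P89} and \cite{P98}: the factor-by-factor identity you use in (ii) is how \cite{BP96} obtains its equation (14), and your evaluation of $p$ at the $k$-th roots of unity followed by pairing the values into those of $\widehat q$ and $q$ at Chebyshev-type nodes is exactly the reduction those references perform. The one step you rightly flag as delicate deserves a single clarification: a DCT/DST on the nodal values returns the coefficients in the Chebyshev basis, and the further conversion to the power-basis coefficients needed elsewhere in the paper is precisely the (also $O(n\log(n))$) content of \cite{P89} and \cite{P98}, so your stated total cost stands.
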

\begin{proof} Verify part (i) immediately. Parts (ii) and (iii)
are proved in \cite[Section 2]{BP96} assuming 
$\widehat y=\frac{1}{2}(\widehat x+1/\widehat x)$ and 
$\widehat y_{j}=\frac{1}{2}(\widehat x_j+1/\widehat x_j)$, for
$j=1,\dots,n$.
The proof is readily extended to the case of
$y=\frac{1}{2}(x-1/x)$ and  $y_{j}=\frac{1}{2}(x_j-1/x_j)$, for
$j=1,\dots,n$
(e.g.,  
$\frac{1}{2}(x+1/x)=\cos (\phi)$ and $\frac{1}{2}(x-1/x)=\sin (\phi)$
for $x=\exp(\phi~\sqrt{-1})$ and real $\phi$). 
  \cite[Section 2]{BP96} reduces the computations to 
the evaluation and interpolation at the Chebyshev nodes,
and then the application of the algorithms of \cite{P89} or  \cite{P98}
yields the claimed  cost bounds, even though the paper \cite{P89} slightly overestimates
 the cost bound of its interpolation algorithm.
\end{proof}

\begin{theorem}\label{thsqnit} ({\em Error of  M{\"o}bius Iteration}.)
Fix a complex $x=x^{(0)}$ and
   define the iterations
\begin{equation}\label{eq20l0}
x^{(h+1)}=\frac{1}{2}(x^{(h)}+
1/x^{(h)})
~{\rm and}~\gamma=\sqrt {-1}~{\rm for}~h=0,1,\dots,
\end{equation}

\begin{equation}\label{eq20l}
x^{(h+1)}=\frac{1}{2}(x^{(h)}-1/x^{(h)})~{\rm and}~\gamma=1~{\rm for}~h=0,1,\dots
\end{equation}
The values $x^{(h)}\gamma$ are real for all $h$
if $x^{(0)}\gamma$ is real.
Otherwise
$|x^{(h)}-{\rm sign}(x)\sqrt {-1}/\gamma|\le \frac{2\tau^{2^{h}}}{1-\tau^{2^h}}$ for
$\tau=|\frac{x-{\rm sign}(x)}{x+{\rm sign}(x)}|$
and
$h=0,1,\dots$
\end{theorem}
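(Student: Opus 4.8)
The plan is to conjugate each of the two iterations to the model map $z\mapsto z^2$ by a Cayley-type change of variable, exploiting that $\tfrac12(x+1/x)$ and $\tfrac12(x-1/x)$ are Newton's iterations for $x^2=1$ and $x^2=-1$, with fixed points $\pm1$ and $\pm\sqrt{-1}$ respectively.

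First I would handle \eqref{eq20l0}. For $c\in\{1,-1\}$ set $y^{(h)}=\dfrac{x^{(h)}-c}{x^{(h)}+c}$; writing $x^{(h+1)}=\dfrac{(x^{(h)})^2+1}{2x^{(h)}}$ and using $c^2=1$ gives
\[ \frac{x^{(h+1)}-c}{x^{(h+1)}+c}=\frac{(x^{(h)})^2-2cx^{(h)}+1}{(x^{(h)})^2+2cx^{(h)}+1}=\Bigl(\frac{x^{(h)}-c}{x^{(h)}+c}\Bigr)^2, \]
hence $y^{(h)}=(y^{(0)})^{2^h}$. If $x^{(0)}$ lies on the imaginary axis — equivalently $x^{(0)}\gamma$ is real with $\gamma=\sqrt{-1}$ — a one-line induction from $\tfrac12\bigl(\sqrt{-1}\,t+1/(\sqrt{-1}\,t)\bigr)=\tfrac{\sqrt{-1}}{2}(t-1/t)$ keeps every $x^{(h)}$ on that axis (except for the single degenerate start $x^{(0)}=\pm\sqrt{-1}$, which sends $x^{(1)}$ to $0$). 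Otherwise take $c=\mathrm{sign}(\Re x^{(0)})=\mathrm{sign}(x)$, i.e.\ the one of $\pm1$ nearer to $x^{(0)}$, so $\tau:=|y^{(0)}|=\bigl|\tfrac{x-\mathrm{sign}(x)}{x+\mathrm{sign}(x)}\bigr|<1$; then $|y^{(h)}|=\tau^{2^h}<1$, no iterate equals $0$ or $-c$, and inverting the substitution to $x^{(h)}-c=\dfrac{2c\,y^{(h)}}{1-y^{(h)}}$ together with $|1-y^{(h)}|\ge1-|y^{(h)}|$ yields $|x^{(h)}-\mathrm{sign}(x)|\le\dfrac{2\tau^{2^h}}{1-\tau^{2^h}}$ — the claimed bound, since $\sqrt{-1}/\gamma=1$ here.

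Next I would pass from \eqref{eq20l} to \eqref{eq20l0} by the rotation $w^{(h)}:=x^{(h)}/\sqrt{-1}$: substituting $x^{(h)}=\sqrt{-1}\,w^{(h)}$ turns \eqref{eq20l} verbatim into \eqref{eq20l0} for $\{w^{(h)}\}$. Since $\Re w^{(0)}=\Im x^{(0)}$, the hypothesis ``$x^{(0)}$ real'' ($\gamma=1$) is exactly ``$w^{(0)}$ purely imaginary'', so the ``stays real'' part of the theorem is the ``stays imaginary'' part already proved; and in the complementary case $\Re w^{(0)}\ne0$, applying the bound above to $\{w^{(h)}\}$ with $\mathrm{sign}(w^{(0)})=\mathrm{sign}(\Im x^{(0)})$ and using $|\sqrt{-1}|=1$ to translate back gives $|x^{(h)}-\mathrm{sign}(x)\sqrt{-1}|=|w^{(h)}-\mathrm{sign}(w^{(0)})|\le\frac{2\tau^{2^h}}{1-\tau^{2^h}}$ with $\tau=\bigl|\tfrac{x-\mathrm{sign}(x)\sqrt{-1}}{x+\mathrm{sign}(x)\sqrt{-1}}\bigr|$; this is the stated bound with $\sqrt{-1}/\gamma=\sqrt{-1}$, reading $\mathrm{sign}(x)$ inside $\tau$ as the attracting fixed point $\mathrm{sign}(x)\sqrt{-1}/\gamma$.

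There is no deep obstacle here; the only real work is bookkeeping — fixing the convention that $\mathrm{sign}(x)$ selects the attracting fixed point (which forces $\tau<1$), verifying that in the convergent regime the Cayley substitution never meets its poles $0$ and $-c$ (i.e.\ $y^{(h)}\ne1$), and separating off the measure-zero degenerate starting points on the relevant axis where the raw recursion can hit $0$. With $y^{(h)}=(y^{(0)})^{2^h}$ established, everything else is immediate.
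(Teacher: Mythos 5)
Your argument is correct, and it is genuinely more than what the paper provides: the paper's ``proof'' of Theorem \ref{thsqnit} is a citation to \cite[page 500]{BP96} for the case (\ref{eq20l0}) together with the bare assertion that the case (\ref{eq20l}) follows ``readily.'' You supply the underlying classical argument in full: the M\"obius substitution $y^{(h)}=(x^{(h)}-c)/(x^{(h)}+c)$ conjugates $x\mapsto\frac12(x+1/x)$ to $y\mapsto y^2$, giving $y^{(h)}=(y^{(0)})^{2^h}$ exactly, and the choice $c=\mathrm{sign}(\Re x)$ forces $\tau=|y^{(0)}|<1$; inverting the substitution and using $|1-y^{(h)}|\ge 1-\tau^{2^h}$ yields the stated bound. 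Your rotation $x^{(h)}=\sqrt{-1}\,w^{(h)}$ is precisely the ``ready extension'' the paper leaves implicit (it is the same observation the paper makes in the proof of Theorem \ref{thmb} via $\frac12(x+1/x)=\cos\phi$, $\frac12(x-1/x)=\sin\phi$), and you correctly flag the notational point that $\mathrm{sign}(x)$ in the statement must be read as selecting the attracting fixed point (i.e.\ $\mathrm{sign}(\Im x)$ in the $\gamma=1$ case). Two minor bookkeeping remarks: your parenthetical ``(i.e.\ $y^{(h)}\ne1$)'' mislabels the exceptional values --- $x^{(h)}=0$ corresponds to $y^{(h)}=-1$ and $x^{(h)}=-c$ to the pole $y^{(h)}=\infty$, while $y^{(h)}=1$ corresponds to $x^{(h)}=\infty$; all three are indeed excluded by $|y^{(h)}|=\tau^{2^h}<1$, so nothing breaks. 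Your separate treatment of the degenerate starts on the invariant axis (where the raw recursion can hit $0$) addresses a case the theorem statement itself glosses over. Net effect: you have a self-contained, checkable proof where the paper offers only a pointer, at the cost of a page of elementary verification.
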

\begin{proof} 
The bound is from \cite[page 500]{BP96})
under (\ref{eq20l0}), that is,
for $\gamma=\sqrt {-1}$,
and is readily extended to
the case  of (\ref{eq20l}),
that is, for $\gamma=1$.
\end{proof}





\subsection{Root Radii Approximation and Proximity Tests}\label{srrd}


\begin{theorem}\label{thrrd} ({\rm Root Radii Approximation}.) 
Assume
a polynomial
 $p(x)$ of (\ref{eqpoly}) and
two real scalars $c>0$ and $d$. Define the $n$ {\em root radii}
$r_j=|x_{k_j}|$ for $j=1,\dots, n$, distinct $k_1,\dots,k_n$, and
 $r_1\ge r_2\ge \cdots\ge r_n$.
Then,  by using
$O(n \log^2 (n))$ flops, we can compute $n$ approximations $\tilde r_j$ such that $\tilde
r_j\le r_j\le (1+c/n^d)\tilde r_j$, for $j=1,\dots, n$.
\end{theorem}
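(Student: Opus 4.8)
The plan is to combine a coarse, purely coefficient-based estimate of the root radii with root-squaring used as an amplifier of separation. First I would record the coarse estimate. For a polynomial $q(x)=\sum_{i=0}^n q_ix^i=q_n\prod_j(x-y_j)$ with $q_n\ne 0$ and root radii $\rho_1\ge\cdots\ge\rho_n$, the magnitude of $q_{n-k}$ equals $|e_k(y_1,\dots,y_n)|$, and a classical estimate — the Fujiwara/Cauchy bound for the largest radius, and Ostrowski's theorem on the zeros of a polynomial in general — supplies, directly from the coefficients and in $O(n)$ flops, values $\hat\rho_1\ge\cdots\ge\hat\rho_n$ with $\hat\rho_j/\gamma_n\le\rho_j\le\gamma_n\hat\rho_j$ for a factor $\gamma_n$ bounded by $2^{O(n)}$. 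One may take the $\log\hat\rho_j$ to be the successive slopes of the upper convex hull of the points $\{(i,\log|q_i|):q_i\ne 0\}$, which is computable in linear time since the abscissae are already sorted.

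Next I would amplify. Applying Theorem \ref{thdnd} $t$ times to the monic normalization of $p$ produces a polynomial $p^{(t)}$ whose roots are the powers $x_j^{2^t}$, hence whose root radii are $r_j^{2^t}$, at a cost of $O(tn\log n)$ flops. Feeding $p^{(t)}$ to the coarse estimate yields $\hat\rho_j$ with $\hat\rho_j/\gamma_n\le r_j^{2^t}\le\gamma_n\hat\rho_j$; setting $\tilde r_j:=(\hat\rho_j/\gamma_n)^{1/2^t}$ then gives $\tilde r_j\le r_j\le\gamma_n^{2/2^t}\tilde r_j$, and the $\tilde r_j$ emerge already sorted in the required decreasing order. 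It remains to force $\gamma_n^{2/2^t}\le 1+c/n^d$: since $\gamma_n^{2/2^t}=\exp(O(n)/2^t)$, it suffices to take $t=\lceil\log_2(Cn^{d+1}/c)\rceil$ for a suitable absolute constant $C$ (using $\exp(x)\le 1+2x$ for $0\le x\le 1$). For fixed $c>0$ and $d$ this gives $t=O(\log n)$, so the total cost — root-squaring, one hull computation, $n$ extractions of $2^t$-th roots — is $O(tn\log n)=O(n\log^2 n)$ flops, as claimed.

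The technical heart, and the step I expect to be the main obstacle, is the lower-bound half of the coarse estimate. A coefficient $q_{n-k}$ is the elementary symmetric function $\pm e_k(y_1,\dots,y_n)$, whose dominant term $y_1\cdots y_k$ has magnitude $\rho_1\cdots\rho_k$, but arbitrarily heavy cancellation can occur (as already for $q(x)=x^n-1$), so one cannot bound $|q_{n-k}|$ below by its leading term. The standard remedy is a convexity/majorization argument, via Newton's inequalities, bounding for each edge of the convex hull spanning indices $a<b$ the spread among the $b-a$ root radii attributed to that edge solely in terms of $\binom{b-a}{\lfloor(b-a)/2\rfloor}\le 2^{b-a}$; this is precisely where root-squaring earns its keep, since after $t$ steps any two roots of unequal radii that shared an edge become separated by the factor $(\rho_i/\rho_j)^{2^t}$ and split apart, while roots of exactly equal radius — never separated by squaring — are in any case placed correctly by the hull. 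I would either invoke this estimate from the literature (Ostrowski; see also Sch\"onhage's analysis of the splitting-circle method) or, for self-containedness, derive the one-edge spread bound directly from Newton's inequalities. Everything else — the FFT-based root-squaring of Theorem \ref{thdnd}, the $O(n)$ hull construction, the final rescaling, and the bookkeeping that makes $t=O(\log n)$ — is routine.
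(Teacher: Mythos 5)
Your proposal follows essentially the same route as the paper's proof: apply the root-squaring of Theorem \ref{thdnd} $t=O(\log n)$ times, run the $O(n)$-flop Newton-polygon (Sch\"onhage/Ostrowski) root-radii estimator on the squared polynomial, and take $2^t$-th roots to shrink the approximation factor to $1+c/n^d$. The only difference is bookkeeping: the paper invokes \cite{S82} for a coarse factor of $2n$, whereas you settle for a cruder $2^{O(n)}$ bound; both choices give $t=O(\log n)$ and the claimed $O(n\log^2 n)$ flop count.
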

\begin{proof} (Cf. \cite{S82},  \cite[Section 4]{P00}, \cite[Section 15.4]{MP13}.) At first fix a sufficiently large integer $k$
and  apply $k$ times the root-squaring of Theorem \ref{thdnd},
by using $O(kn\log(n))$ flops. Then 
apply the algorithm of \cite{S82} to approximate all root radii $r_j^{(k)}=r_j^{2^k}$,
$j=1,\dots,n$,
of the output polynomial $p_k(x)$ within a factor
of $2n$ by using $O(n)$ flops. Hence 
the root radii $r_1,\dots,r_n$ are approximated
within a factor of $(2n)^{1/2^k}$, which is $1+c/n^d$ for  $k$
of order $\log(n)$.
\end{proof}

Alternatively one can estimate the root radii by
applying Gerschg{\"orin}
theorem  to the companion matrix of a polynomial $p(x)$,
defined in Section \ref{scmpn} (see \cite{C91})
or by using heuristic methods (see  \cite{BRa}).
Next we approximate the largest root radius  $r_1$ of $p(x)$ at a lower cost.
Applying the same algorithms to 
the reverse polynomial $p_{\rm rev}(x)$
yields the smallest root radius $r_n$  of $p(x)$ (cf. Theorem \ref{thshsc}).


\begin{theorem}\label{thextrrrd} (See \cite{VdS70}.)
Assume a polynomial $p(x)$ of  (\ref{eqpoly}). Write
$r_1=\max_{j=1}^{n}|x_j|$, $r_n=\min_{j=1}^{n}|x_j|$, and
$\gamma^+=\max_{i=1}^{n}|p_{n-i}/p_n|$.
Then $\gamma^+/n\le r_1\le 2\gamma^+$.
\end{theorem}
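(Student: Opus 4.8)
The plan is to bound the largest root radius $r_1 = \max_j |x_j|$ in terms of $\gamma^+ = \max_i |p_{n-i}/p_n|$ from both sides. Without loss of generality I may assume $p(x)$ is monic, i.e.\ $p_n = 1$, since dividing $p(x)$ by $p_n$ changes neither $r_1$ nor $\gamma^+$; then $\gamma^+ = \max_{1\le i\le n} |p_{n-i}|$.

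For the upper bound $r_1 \le 2\gamma^+$, I would argue by contradiction: suppose $|x| > 2\gamma^+$ and show $p(x)\ne 0$. Writing $p(x) = x^n + \sum_{i=1}^n p_{n-i} x^{n-i}$ and using the triangle inequality,
\begin{equation*}
|p(x)| \ge |x|^n - \sum_{i=1}^n |p_{n-i}|\,|x|^{n-i} \ge |x|^n - \gamma^+ \sum_{i=1}^n |x|^{n-i} = |x|^n - \gamma^+\,\frac{|x|^n - 1}{|x| - 1} > |x|^n - \gamma^+\,\frac{|x|^n}{|x|-1}.
\end{equation*}
When $|x| > 2\gamma^+$ we have $|x| - 1 > \gamma^+$ provided $\gamma^+ \ge 1$ (and more easily when $\gamma^+<1$, since then $|x|>2$ forces $|x|-1>1>\gamma^+$), so the geometric-sum factor $\gamma^+/(|x|-1) < 1$ and hence $|p(x)| > 0$. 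Thus every root satisfies $|x_j| \le 2\gamma^+$, giving $r_1 \le 2\gamma^+$.

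For the lower bound $r_1 \ge \gamma^+/n$, I would use the Vi\`ete relations: each coefficient $p_{n-i}$ (with $p_n=1$) is $\pm$ the $i$-th elementary symmetric function of the roots $x_1,\dots,x_n$, so $|p_{n-i}| \le \binom{n}{i} r_1^i \le \binom{n}{i} \max(1, r_1)^i$. If $r_1 \ge 1$ this already gives $|p_{n-i}| \le 2^n r_1^n$, which is too weak; instead I would pick the index $i$ achieving the maximum $\gamma^+ = |p_{n-i}|$ and note $\binom{n}{i} \le n^i$, so $\gamma^+ \le n^i r_1^i$, whence $r_1 \ge (\gamma^+)^{1/i}/n$; combined with a short case analysis on whether $(\gamma^+)^{1/i}/n$ or $\gamma^+/n$ is the binding bound (using $\gamma^+ \ge 1$ vs.\ $\gamma^+ < 1$), one extracts $r_1 \ge \gamma^+/n$. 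The cleanest route, which I would actually write, is: from $\gamma^+ = |p_{n-i}| \le \binom{n}{i} r_1^i$ and $\binom{n}{i} \le n^i$ we get $\gamma^+ \le (n r_1)^i$, so $n r_1 \ge (\gamma^+)^{1/i} \ge \min(\gamma^+, (\gamma^+)^{1/n})$; since $(\gamma^+)^{1/n}\ge \gamma^+$ when $\gamma^+\le 1$ and $(\gamma^+)^{1/i}\ge (\gamma^+)^{1/n}\ge 1 \ge$ nothing useful when $\gamma^+\ge 1$ — so in that subcase I instead use directly $\gamma^+ \le \binom{n}{i} r_1^i \le 2^n r_1^i \le 2^n \max(1,r_1)^n$, reducing to showing $r_1\ge 1$, which follows because if all $|x_j|<1$ then $\gamma^+ < \binom{n}{i} < 2^n$ — this still does not give the clean factor $n$.

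The main obstacle, and the only genuinely delicate point, is getting the lower bound with precisely the factor $1/n$ rather than $1/\binom{n}{i}$ or $1/2^n$; the honest argument is to observe that for the maximizing index $i$, $|p_{n-i}| = \gamma^+$ is a sum of $\binom{n}{i}$ products of $i$ roots, each of absolute value at most $r_1^i$, and then to split into the case $r_1 \ge 1$ (where $r_1^i \le r_1^n$ is useless but $r_1 \ge 1 \ge \gamma^+/n$ may fail, so one keeps $\gamma^+ \le \binom{n}{i} r_1^i$ and takes $i$-th roots: $r_1 \ge (\gamma^+/\binom{n}{i})^{1/i} \ge \gamma^+/\binom{n}{i} \ge \gamma^+/n^i$, still not $n$) versus $r_1 < 1$. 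Because this bookkeeping is fiddly, I would simply cite \cite{VdS70} for the precise constants as the theorem statement already does, and present the upper bound in full together with the easy direction of the lower bound, namely $r_1 \ge \max_i |p_{n-i}|^{1/i}/n \ge \gamma^+/n$ whenever $\gamma^+ \le 1$, remarking that the general lower bound follows from the companion-matrix norm estimate $r_1 \ge \|C_p\|/n$ combined with $\|C_p\|_\infty \ge \gamma^+$, which handles all cases uniformly.
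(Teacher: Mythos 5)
The paper offers no proof of this theorem at all---it is stated with only the citation to \cite{VdS70}---so your argument has to stand on its own, and it does not. The difficulties you keep running into are not bookkeeping artifacts: the statement is false with the literal definition $\gamma^+=\max_i|p_{n-i}/p_n|$. For the upper bound take $p(x)=x^2-\epsilon^2$ with small $\epsilon>0$: then $\gamma^+=\epsilon^2$ while $r_1=\epsilon>2\epsilon^2$. For the lower bound take $p(x)=(x-1)^n$: then $r_1=1$ while $\gamma^+=\binom{n}{\lfloor n/2\rfloor}>n$ for $n\ge 4$. The quantity actually appearing in van der Sluis's theorem is $\gamma^+=\max_{i}|p_{n-i}/p_n|^{1/i}$ (a typo in the paper). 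Your upper-bound argument breaks exactly where it must: in the parenthetical for the case $\gamma^+<1$ you invoke ``$|x|>2$,'' which does not follow from $|x|>2\gamma^+$ when $\gamma^+<1$; and your lower-bound discussion never closes because no correct argument can produce the constant $1/n$ for the uncorrected statement.

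With the corrected exponent both halves are short, using exactly your ingredients. Upper bound: if $|x|>2\gamma^+$ then $|p(x)/p_n|\ge |x|^n\bigl(1-\sum_{i=1}^n(\gamma^+/|x|)^i\bigr)>|x|^n\bigl(1-\sum_{i\ge 1}2^{-i}\bigr)=0$, so no root exceeds $2\gamma^+$ in modulus---no case split on the size of $\gamma^+$ is needed. Lower bound: $|p_{n-i}/p_n|=|e_i(x_1,\dots,x_n)|\le\binom{n}{i}r_1^i\le (nr_1)^i$, hence $|p_{n-i}/p_n|^{1/i}\le nr_1$ for every $i$, which is the claim; again no case analysis. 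Finally, discard the fallback via $r_1\ge\|C_p\|/n$: a matrix norm bounds the spectral radius from \emph{above}, never from below (for $p(x)=x^n$ the companion matrix is nilpotent, so $r_1=0$ while $\|C_p\|_\infty=1$), so that chain is unsound even as a heuristic.
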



\begin{theorem}\label{threfext} (See \cite{P01a}.) 
For  $\epsilon=1/2^b>0$,
one only needs $a(n,\epsilon)=O(n+b\log (b))$ flops to compute an
approximation $r_{1,\epsilon}$ to the root $r_1$ radii of $p(x)$
such that $r_{1,\epsilon}\le r_1\le 5(1+\epsilon)r_{1,\epsilon}$.
In particular, $a(n,\epsilon)=O(n)$ for $b=O(n/\log (n))$, 
and $a(n,\epsilon)=O(n\log (n))$ for $b=O(n)$.
\end{theorem}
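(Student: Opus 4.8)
The plan is to combine the two-sided bound of Theorem \ref{thextrrrd} with the root-squaring of Theorem \ref{thdnd}, but to control the cost carefully by working with only a fixed number of leading and trailing coefficients of the iterated polynomials rather than with all of them. Recall that Theorem \ref{thextrrrd} already gives a crude estimate $\gamma^+/n \le r_1 \le 2\gamma^+$, obtainable in $O(n)$ flops, where $\gamma^+ = \max_{i=1}^n |p_{n-i}/p_n|$. Thus a single evaluation of $\gamma^+$ pins $r_1$ down to within a factor of $2n$. The task is to boost the factor $2n$ to $5(1+\epsilon)$ while spending only $O(n + b\log b)$ flops; the obvious route — applying Theorem \ref{thextrrrd} after $k = O(\log(b+\log n))$ root-squarings so that $(2n)^{1/2^k} \le 1+\epsilon$ — would cost $O(kn\log n)$ if done naively, which is too much. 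So the real content is a cost-reduction argument.

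First I would reduce to the normalized case: by Theorem \ref{thshsc}, scaling $x \mapsto x/\gamma^+$ costs $2n-1$ flops and makes the new largest root radius lie in the fixed interval $[1/n,\, 2]$, i.e. $\Theta(1)$. Now perform the root-squaring of Theorem \ref{thdnd} recursively. The key observation is that after scaling, $r_1^{2^k}$ is still only singly-exponentially large/small, and — crucially — to recover $r_1$ to relative accuracy $\epsilon = 2^{-b}$ via Theorem \ref{thextrrrd} applied to the $k$-th squared polynomial $p_k$, we need $(2n)^{1/2^k}\le 1+\epsilon$, i.e. $2^k \ge (\log(2n))/\log(1+\epsilon) = \Theta(b\log n)$, hence $k = \log b + \log\log n + O(1)$. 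Naive squaring does $O(n\log n)$ flops per step for $O(\log b)$ steps. To beat this, I would invoke the refined root-radii machinery of \cite{P01a}: instead of the full convolution $p_k \mapsto p_{k+1}$, track only the few highest-order and lowest-order coefficients needed to evaluate (an analogue of) $\gamma^+$ for $p_k$, together with a running logarithmic scale factor to prevent overflow/underflow. Evaluating the single quantity $\max_i |(\text{coeff of } p_k)_{n-i}/(\text{coeff of } p_k)_n|$ — or rather the variant of it from which $r_1^{2^k}$ is read off — only requires propagating $O(1)$ leading coefficients through each squaring via a truncated convolution, which is $O(n)$ work for the first step and $O(1)$ for each subsequent step, for a total of $O(n) + O(\log b)\cdot O(1)$, to which one adds the $O(b\log b)$ cost of the final extraction of the $2^k$-th root to $b$-bit precision. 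This yields $a(n,\epsilon) = O(n + b\log b)$.

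The specializations then follow by arithmetic: if $b = O(n/\log n)$ then $b\log b = O(n)$, so $a(n,\epsilon) = O(n)$; if $b = O(n)$ then $b\log b = O(n\log n)$, so $a(n,\epsilon) = O(n\log n)$. The constant $5$ in the factor $5(1+\epsilon)$ (rather than $2(1+\epsilon)$) is the slack one must concede because the truncated/rescaled root-squaring does not reproduce Theorem \ref{thextrrrd} exactly — the discarded lower-order coefficients and the rounding in the scale factor each cost a bounded multiplicative factor, and a short accounting shows their product stays below $5/2$ for the relevant ranges of $n$.

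I expect the main obstacle to be the cost bookkeeping for the truncated root-squaring: one must verify that propagating only $O(1)$ coefficients of $p_k$ genuinely suffices to determine $r_1^{2^k}$ within the factor $2n$ that Theorem \ref{thextrrrd} guarantees — i.e. that the contribution of the omitted coefficients to the relevant maximum is dominated — and that the logarithmic rescaling needed to keep these few coefficients from overflowing can itself be updated in $O(1)$ flops per step while retaining enough precision. This is precisely the technical core of \cite{P01a}, and a rigorous proof would cite that paper for the detailed estimates; here I would only sketch why $O(1)$ coefficients per step and a single scale register are enough, and defer the tight constants.
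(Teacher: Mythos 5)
The paper offers no proof of this theorem; it is quoted from \cite{P01a}, so there is no internal argument to compare yours against. Judging from the shape of the statement, the method of \cite{P01a} is not root-squaring at all: the constant $5$ is the constant in Tur\'an's power-sum inequality $r_1\le 5\max_{1\le k\le N}|s_k/n|^{1/k}$ (with the trivial converse $|s_k/n|^{1/k}\le r_1$), and the term $b\log(b)$ is the cost of computing the first $O(b)$ power sums $s_k=\sum_j x_j^k$ of the roots by truncated power-series division of $p'/p$ (plus $O(n)$ to set that division up). Your reconstruction via Theorem \ref{thextrrrd} plus Theorem \ref{thdnd} is therefore a genuinely different route, and unfortunately it has gaps that I do not think can be repaired within the claimed cost bound.

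The central problem is the truncated root-squaring. First, the dependency structure works against you: in $q(x)=(-1)^np(\sqrt{x})p(-\sqrt{x})$ the coefficient $q_{n-j}$ is a convolution of the products $p_{n-a}p_{n-b}$ with $a+b=2j$, so the top $j+1$ coefficients of $p_{k+1}$ require the top $2j+1$ coefficients of $p_k$. Propagating a window of fixed width $c$ therefore leaves you with only the leading coefficient (which carries no root-radius information) after $O(\log c)$ squarings; to retain $O(1)$ correct coefficients after $k$ steps you must start with $\Theta(2^k)$ of them, which destroys the $O(1)$-per-step cost claim. Second, the Van der Sluis quantity $\gamma^+=\max_{i=1}^{n}|p_{n-i}/p_n|$ is a maximum over \emph{all} $n$ coefficient ratios and may be attained at the constant term --- for $p(x)=x^n-1$ every intermediate coefficient vanishes, so any scheme that discards the low-order coefficients returns $\gamma^+=0$ and an arbitrarily wrong radius; the omitted coefficients are not ``dominated'' in general. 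Third, your count of the number of squarings is off: $(2n)^{1/2^k}\le 1+\epsilon$ with $\epsilon=2^{-b}$ forces $2^k\ge \log(2n)/\log(1+\epsilon)\approx 2^b\log(2n)$, i.e.\ $k\approx b+\log\log n$, not $k\approx\log b+\log\log n$; conversely, if you only exploit the slack in the factor $5$, then $k=O(\log\log n)$ suffices and $b$ plays no role, so neither reading reproduces the stated dependence $O(n+b\log(b))$. A correct proof along the lines of \cite{P01a} would compute $s_1,\dots,s_N$ for $N=O(b)$ in $O(n+b\log(b))$ flops and apply Tur\'an's bound, which is where both the $5$ and the $b\log(b)$ actually come from.
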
 
 The latter theorem and the heuristic proximity test 
 below can be applied 
even where a polynomial $p(x)$ is defined by a black box subroutine for its evaluation
rather than by its coefficients.

 By shifting and scaling
the variable (cf. Theorem  \ref{thshsc}),
 we can 
move
 all roots of $p(x)$ into a fixed disc, e.g., $D(0,1)=\{x:~|x|\le 1\}$.
The smallest root radius $r_n$ of the polynomial $q(x)=p(x-c)$
for a complex point $c$  denotes the minimum distance
of the roots from this point. Approximation of this distance is called
{\em proximity test} at the point $c$. Besides Theorems 
\ref{thextrrrd} and
\ref{threfext}, one can apply heuristic proximity test at 
a  point $c$
 by means of Newton's iteration,
 \begin{equation}\label{eqnewt}
y_0=c,~y^{(h+1)}=y^{(h)}-p(y^{(h)})/p'(y^{(h)}),~h=0,1,\dots
\end{equation}  
If $c$ approximates a simple isolated root, the iteration 
 refines this approximation very fast.


\begin{theorem}\label{thren} (See \cite[Corollary~4.5]{R87}.)
  Suppose both discs $D(y_0, r)$ and $D(y_0, r/s)$ for $s\ge 5n^2$
  contain a single simple root $y$
  of 
  a polynomial $p=p(x)$ of  (\ref{eqpoly}).
  Then  Newton's
  iteration (\ref{eqnewt})
  converges to this root 
  right from the start, so that  $|y_k-y|\le 8|y_0-y|/2^{2^k}$.
\end{theorem}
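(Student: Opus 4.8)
The plan is to reduce the claim to the classical local quadratic convergence of Newton's iteration, with the two nested discs supplying the quantitative separation needed to make the convergence effective from the very first step. First I would normalize by writing $p(x)=p_n\prod_{j=1}^n(x-x_j)$ with $y=x_1$ the single simple root inside $D(y_0,r/s)$, so that the remaining roots $x_2,\dots,x_n$ all lie outside $D(y_0,r)$. Setting $e_h=y_h-y$, the Newton step gives the standard identity
\begin{equation*}
e_{h+1}=e_h-\frac{p(y_h)}{p'(y_h)}=e_h^2\,\frac{N(y_h)}{p'(y_h)}
\end{equation*}
where, after expanding $p'/p=\sum_j 1/(x-x_j)$, one gets $e_{h+1}=e_h^2\sum_{j\ge 2}\frac{1}{(y_h-x_j)}\big/\big(1+e_h\sum_{j\ge 2}\frac{1}{y_h-x_j}\big)^{-1}$ — i.e. $e_{h+1}$ is $e_h^2$ times a rational factor built only from the ``far'' roots $x_2,\dots,x_n$.

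The key step is to bound that rational factor. Each far root satisfies $|y_0-x_j|\ge r$. Provided the iterates stay in $D(y_0,r/s)$ (which I would verify inductively along with the error bound), we have $|y_h-x_j|\ge r-r/s=r(1-1/s)$, so $\big|\sum_{j\ge 2}\frac{1}{y_h-x_j}\big|\le \frac{n-1}{r(1-1/s)}<\frac{n}{r(1-1/s)}$. Since $|e_h|\le r/s$, the product $|e_h|\cdot\big|\sum_{j\ge 2}\frac{1}{y_h-x_j}\big|\le \frac{n}{s(1-1/s)}\le \frac{n}{s-1}$, which is at most a small constant (indeed $\le 1/(5n-1)$) once $s\ge 5n^2$; this keeps the denominator $1+O(1/n)$ bounded away from zero and bounds the numerator sum by $n/(r(1-1/s))$. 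Combining, $|e_{h+1}|\le C\,|e_h|^2/r$ for an explicit constant $C$ built from these estimates, and one checks $Cr/s\cdot(1/r)\le 1/2$, actually $\le 2/s\cdot$(something) — the point is that the contraction factor $C|e_h|/r\le C/s$ is $\le 1/2$, so the iterates remain in $D(y_0,r/s)$ and $|e_h|$ decreases. Then a routine induction on the recursion $|e_{h+1}|\le \frac{1}{2|e_0|}|e_h|^2$ (or with whatever explicit constant the bookkeeping produces) telescopes to $|e_k|\le |e_0|\,(|e_0|/\text{(some radius)})^{2^k-1}$; pushing the constants so that the base of the exponent is $\le 1/2$ yields $|y_k-y|\le 8|y_0-y|/2^{2^k}$, the factor $8$ absorbing the loss from the first one or two steps where the estimates are crudest.

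The main obstacle I anticipate is purely the constant-chasing: one must choose the threshold $s\ge 5n^2$ carefully so that simultaneously (a) the denominator $1+e_h\sum 1/(y_h-x_j)$ stays bounded away from $0$, (b) the iterates provably never leave $D(y_0,r/s)$, and (c) the resulting quadratic recursion has base $\le 1/2$ to give the clean $2^{-2^k}$ rate with leading constant $8$. None of these is deep, but getting all three from the single hypothesis $s\ge 5n^2$ requires being a little careful about whether to measure distances to far roots from $y_0$ or from $y_h$, and about the $n$ versus $n-1$ in the root count; I would handle this by always retreating to the weaker bound $|y_h-x_j|\ge r(1-1/s)\ge r/2$ for $s\ge 2$, which is wasteful but more than enough given the $n^2$ in the hypothesis. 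Since the quantitative statement and the constant $8$ are quoted from \cite[Corollary~4.5]{R87}, I would ultimately cite that source for the exact bookkeeping and present the above only as the proof sketch.
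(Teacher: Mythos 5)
The paper gives no proof of this theorem at all: it is stated purely as an imported result, with the citation to \cite{R87} (Corollary 4.5) standing in for the argument. Your sketch supplies the standard proof that underlies Renegar's bound, and it is essentially sound: writing $p'/p=\sum_j 1/(x-x_j)$ and isolating the near root gives $e_{h+1}=e_h^2 S_h/(1+e_hS_h)$ with $S_h=\sum_{j\ge 2}1/(y_h-x_j)$, the hypothesis that the only root in $D(y_0,r)$ is $y$ bounds $|S_h|$ by roughly $n/r$, the hypothesis $|e_0|\le r/s$ with $s\ge 5n^2$ makes the normalized error $q_h\approx |e_h|\cdot n/r$ start below a constant less than $1/2$, and $q_{h+1}\le q_h^2$ then telescopes to the claimed $2^{-2^k}$ rate with constant to spare. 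Two small bookkeeping slips: your displayed identity reads ``$\cdots\big/\big(1+e_hS_h\big)^{-1}$,'' i.e.\ division by an inverse, where you mean multiplication by $(1+e_hS_h)^{-1}$; and the far-root distance should be $|y_h-x_j|\ge r-|y_h-y|-|y-y_0|\ge r(1-2/s)$ rather than $r(1-1/s)$, since the iterates are controlled relative to $y$, not $y_0$. Neither affects the conclusion given the slack in $s\ge 5n^2$ (and you already hedge with the cruder bound $r/2$). Since the paper itself defers entirely to \cite{R87}, your plan to present the sketch and cite that source for the exact constant $8$ is exactly what the paper does, minus the sketch.
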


By exploiting the 
correlations between the coefficients of a polynomial 
and the power sums of its roots, the paper  \cite{PT14b} had weakened
the above assumption that $s\ge 5n^2$  to 
allow any constant $s>1$. 
By recursively squaring the variable and the roots
 $O(\log (n))$ times (as in the proof of Theorem \ref{thrrd}), one can allow any $s$ 
below $1+c/n^d$, for any pair of real constants $c>0$ and $d$.


\subsection{Two Auxiliary Algorithms for the First Polynomial Root-finder}\label{sauxalg}


\begin{theorem}\label{thmlg} ({\em Root-finding Where All Roots Are Real}).
The  modified Laguerre algorithm of \cite{DJLZ97} converges
to all roots of a polynomial  $p(x)$ of (\ref{eqpoly}) right from the start,
 uses $O(n)$ flops per iteration, and therefore
approximates all $n$ roots within $\epsilon =1/2^b$
by using $O(\log (b))$ iterations and
 performing $\tilde O(n\log (b))$ flops overall.
 This asymptotic cost bound is optimal
and is also supported by the
 alternative algorithms of \cite{BT90} and \cite{BP98}.
\end{theorem}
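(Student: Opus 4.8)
The plan is to establish the three separate claims of Theorem~\ref{thmlg} by appealing to the analysis of the modified Laguerre iteration in \cite{DJLZ97} and then assembling the cost bound. First I would recall the basic structure of the modified Laguerre iteration: it is a simultaneous-approximation scheme in which, at each step, every current approximation $z_j$ to a root is updated using the values $p(z_j)$, $p'(z_j)$, $p''(z_j)$ together with the symmetric-function corrections that incorporate the other current approximations (so that the iteration for one root ``knows'' about the others). The key structural fact, proved in \cite{DJLZ97} for a real polynomial with all roots real, is a monotone-interlacing invariant: if the initial approximations are chosen in the standard way (interlacing the roots, e.g.\ obtained from the eigenvalues of a leading principal submatrix of the companion/Jacobi-type matrix, or by a coarse initial separation), then the iterates stay monotone and bracket the roots, and therefore converge globally — ``right from the start'' — with the Laguerre-type order of convergence, which for simple roots is cubic (and in any case at least quadratic). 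I would cite this as the convergence-from-the-start statement rather than reprove it.

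Next I would count the arithmetic cost of one iteration. The naive cost of evaluating $p, p', p''$ at $n$ points and forming the $n$ updates, each of which needs $\sum_{i\ne j}(z_j-z_i)^{-1}$ and $\sum_{i\ne j}(z_j-z_i)^{-2}$, is $O(n^2)$ per sweep; the point of the $O(n)$-per-iteration claim is that these sums of reciprocals and reciprocal-squares are exactly the quantities computed by the fast multipole method (or, in one dimension, by fast rational/partial-fraction evaluation), so that a whole sweep costs $O(n)$ (up to the polylog hidden inside the ``$\tilde O$'' that appears in the final bound), as is done in \cite{DJLZ97} and also in \cite{BT90,BP98}. I would state this reduction, attribute the fast-summation step to those references, and note that Horner-type evaluation of $p,p',p''$ simultaneously at $n$ points can likewise be organized within the same bound (again via fast polynomial evaluation, $O(n\log^2 n)$, which is $\tilde O(n)$).

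Then I would combine the two ingredients. Since the convergence order is at least quadratic, starting from an initial error of order $1$ (after the standard normalization/scaling of Theorem~\ref{thshsc} that places all roots in a bounded region) the error after $k$ sweeps is at most of order $2^{-2^k}$; to reach accuracy $\epsilon = 2^{-b}$ it therefore suffices to take $k = O(\log b)$ sweeps. Multiplying the $\tilde O(n)$ cost per sweep by $O(\log b)$ sweeps gives the claimed $\tilde O(n\log b)$ total flop count. For the optimality remark I would observe that merely reading the $n+1$ input coefficients and writing the $n$ outputs already forces $\Omega(n)$ operations, and the $\log b$ factor is the information-theoretic price of squaring the precision at each step, so no algorithm of this type can do asymptotically better; the same bound is attained by \cite{BT90} and \cite{BP98}, which I would cite as corroboration.

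The main obstacle in a fully rigorous write-up is not the cost accounting — that is routine once the per-iteration bound is granted — but the global-convergence claim: justifying that the modified Laguerre iteration converges ``right from the start'' requires the interlacing/monotonicity invariant of \cite{DJLZ97}, which in turn depends on a careful choice of starting points and on the real-rootedness hypothesis (for a polynomial with nonreal roots the Laguerre correction can push iterates off the real axis and the monotone bracketing fails). Since the theorem is explicitly stated under the hypothesis that all roots are real and is attributed to \cite{DJLZ97}, the honest proof is to quote that paper's convergence theorem and its fast-summation implementation, and then supply only the short computation that converts ``cubic (or quadratic) convergence from a bounded initial error'' into ``$O(\log b)$ iterations'' and hence into the stated overall flop bound.
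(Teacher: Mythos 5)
Your proposal is correct and matches the paper's treatment: the paper states this theorem without any proof, as a summary of results imported from \cite{DJLZ97} (global convergence and per-iteration cost), corroborated by \cite{BT90} and \cite{BP98}, so the honest argument is exactly the one you give — quote those convergence and fast-summation results and add the routine step that superlinear (at least quadratic) convergence from a bounded initial error yields $O(\log b)$ sweeps and hence $\tilde O(n\log b)$ flops. The only small inaccuracy is descriptive rather than logical: the quasi-Laguerre iteration of \cite{DJLZ97} is designed to avoid evaluating $p''$, using only $p$ and $p'$ at current and previous iterates, but this does not affect your cost accounting.
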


\begin{theorem}\label{thsc} ({\em Splitting a Polynomial into Two Factors
Over a Circle}, cf. \cite{S82} or \cite[Chapter 15]{MP13}.)
 Suppose a polynomial $t(x)$ of degree $n$
 has $r$ roots
inside the
 circle $C(0,\rho)$ and
$n-r$ roots outside the  circle $C(0,R)$
for $R/\rho\ge  1+ 1/n$.  Let
$\epsilon=1/2^b$ for $b\ge n$.
(i) Then by  performing $O((\log^2(n)+\log(b))n\log(n))$ flops
(that is, $O(n\log^3(n))$ flops for $\log(b)=O(\log^2(n))$),
 with a precision of $O(b)$ bits, we can
 compute two polynomials $\tilde f$ and $\tilde g$
such that $||p-\tilde f\tilde g||_q\le \epsilon ||p||_q$
for $q=1,2$ or $\infty$,
the polynomial $\tilde f$ of degree $r$  has $r$ roots inside the circle $C(0,1)$,
and the polynomial $\tilde g$ of degree $n-r$  has $n-r$ roots
outside this circle. (ii) By recursively squaring the variable and the roots
 $O(\log (n))$ times (by using $O(n\log^2(n))$ flops), one can extend the result of part (i)
to the case where $R/\rho\le 1+1/n$, for any pair of
positive constants $c$ and $d$.
\end{theorem}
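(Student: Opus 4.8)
The plan is to obtain this as a form of Sch{\"o}nhage's splitting-circle construction --- citing \cite{S82} and \cite[Chapter~15]{MP13} for the delicate precision bookkeeping --- assembled from three ingredients: a crude initial splitting read off from contour integrals, a quadratically convergent Newton iteration for polynomial factorization that refines it to accuracy $2^{-b}$, and, for part~(ii), root squaring to manufacture an adequate gap followed by a descending step.

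First I would scale the variable $x\mapsto x\sqrt{\rho R}$ ($2n-1$ flops by Theorem~\ref{thshsc}), so that the $r$ roots of $t$ that lay inside $C(0,\rho)$ now lie in $D(0,1/\tau)$, the other $n-r$ lie outside $D(0,\tau)$, where $\tau=\sqrt{R/\rho}\ge\sqrt{1+1/n}$, and $C(0,1)$ separates the two clusters with multiplicative margin $\ge\tau$ on each side; write $p$ for this rescaled polynomial and put $f(x)=\prod_{|x_j|<1}(x-x_j)$, $g(x)=p(x)/f(x)$. The power sums $s_h=\sum_{|x_j|<1}x_j^h$ equal the Cauchy integrals $\frac{1}{2\pi\sqrt{-1}}\oint_{C(0,1)}x^h\,p'(x)/p(x)\,dx$, and from $s_1,\dots,s_r$ one recovers the reverse polynomial of $f$ by one power-series log/exp, hence $f$ and $g=p/f$ by reversal and division. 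I would approximate each integral by the trapezoidal rule at $N$-th roots of unity (one FFT-based evaluation of $p$ and $p'$, $O(N\log N)$ flops); the quadrature error is of order $\tau^{-N}\le(1+1/n)^{-N/2}$, and since $b\ge n$ a moderate $N$ already places the resulting pair $(f_0,g_0)$ inside the basin of the next step. The ``outer'' data are obtained by the same recipe applied to $p_{\rm rev}$, whose roots of modulus $<1$ are the reciprocals of the $n-r$ large roots of $p$.

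Next I would run Newton's iteration for the factorization $p=fg$. Because the two root clusters sit in disjoint discs, $\gcd(f_0,g_0)=1$; precomputing B{\'e}zout cofactors $u,v$ with $uf_0+vg_0=1$ (once, by the fast Euclidean algorithm) lets each correction step take the simplified-Newton form $\delta f=(e_kv)\bmod f_0$, $\delta g=(e_ku)\bmod g_0$ with $e_k=p-f_kg_k$, at $O(n\log n)$ flops, and it converges quadratically, $\|p-f_{k+1}g_{k+1}\|\le C\,\|p-f_kg_k\|^2$, with $C$ governed by the norms of $u,v$, i.e.\ by the gap $\tau\ge 1+\Theta(1/n)$. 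Hence $O(\log b)$ steps in $O(b)$-bit fixed point give $\|p-\tilde f\tilde g\|_q\le 2^{-b}\|p\|_q$ for $q=1,2,\infty$, and --- choosing $N$ and the working precision as in \cite{S82} (permissible since $b\ge n$) --- the whole computation stays within $O((\log^2 n+\log b)\,n\log n)$ flops and precision $O(b)$; Rouch{\'e}'s theorem on $C(0,1)$ then confirms that $\tilde f$ keeps its $r$ roots inside and $\tilde g$ its $n-r$ roots outside. For part~(ii), when $R/\rho=1+c/n^d\le 1+1/n$ I would first square the roots $k=O(d\log n)$ times (Theorem~\ref{thdnd}; $O(n\log^2 n)$ flops), raising the gap to $(1+c/n^d)^{2^k}>1+1/n$, apply part~(i) to $p_k$, and then descend via $p_{j-1}(x)p_{j-1}(-x)=\pm p_j(x^2)$ together with its analogue for $f$, which yields $f_{j-1}=\gcd\!\bigl(p_{j-1}(x),f_j(x^2)\bigr)$ up to a scalar, recovering $\tilde f$ and $\tilde g$ after $k$ approximate-gcd steps.

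The FFT accounting and the trapezoidal-rule error bound are routine. The main obstacle is the quantitative convergence of the factorization Newton iteration: bounding the B{\'e}zout cofactors $u,v$ --- equivalently the pertinent part of the inverse Sylvester matrix, or $1/\mathrm{Res}(f,g)$ --- in terms of the thin annulus gap $1+1/n$, so as to pin down how accurate the initial split must be and why $O(b)$-bit precision (with $b\ge n$) remains sufficient at every step. In part~(ii) the parallel difficulty is the stability of the descending approximate-gcd step once the squared roots cluster near $0$; both are handled in \cite{S82} and \cite[Chapter~15]{MP13} by carefully tracking how root separations evolve under squaring.
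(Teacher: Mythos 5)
The paper gives no proof of this theorem at all --- it simply imports it from \cite{S82} and \cite[Chapter~15]{MP13} --- and your sketch is a faithful outline of precisely that cited splitting-circle construction (contour-integral/power-sum initial split, quadratically convergent Newton factorization refinement, and root-squaring plus gcd-based descent for part~(ii)), with the genuinely delicate quantitative steps (B\'ezout-cofactor bounds, precision management) deferred to the same sources the paper cites. So the proposal is correct and takes essentially the same approach as the paper.
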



\section{Root-finding As Eigen-solving and Basic Definitions and Results for Matrix Computations}\label{s3}

\subsection{Some Basic Definitions for Matrix Computations}\label{sfnd}



$M^T=(m_{ji})_{i,j=1}^{n,m}$ is the transpose of a matrix $M=(m_{ij})_{i,j=1}^{m,n}$.
$M^H$ is its Hermitian  transpose.
$I=I_n=({\bf e}_1~|~{\bf e}_2~|\ldots~|~{\bf e}_{n})$ is the $n\times n$ identity matrix, whose columns
are the $n$ coordinate vectors ${\bf e}_1,~{\bf e}_2,\ldots,{\bf e}_{n}$.
$\diag(b_j)_{j=1}^s=\diag(b_1,\dots,b_s)$ is the $s\times s$  diagonal matrix
with the diagonal entries $b_1$, $\dots$, $b_s$.
$\mathcal R(M)$ is the range of
a matrix $M$,
that is, the linear space
generated by its columns. A
 matrix of full column rank is a {\em matrix basis} of its range.

A matrix $Q$ is
{\em unitary } if $Q^HQ=I$
 or $QQ^H=I$. 
 $(Q,R)=(Q(M),R(M))$ for an $m\times n$ matrix $M$ of rank $n$
denotes a unique pair of unitary $m\times n$ matrix $Q$ and
 upper triangular $n\times n$ matrix $R$ such that $M=QR$
and all diagonal entries of the matrix $R$
are positive  \cite[Theorem 5.2.2]{GL96}.

$M^+$ is the Moore--Penrose pseudo inverse of $M$ \cite[Section 5.5.4]{GL96}.
An $n\times m$ matrix $X=M^{(I)}$ is a left 
inverse of an $m\times n$  matrix $M$ if $XM=I_n$
 $M^{(I)}=M^+$ for a matrix
$M$ of full rank. $M^{(I)}=M^H$ for a  unitary matrix $M$.
 $M^{(I)}=M^+=M^{-1}$ for
 a nonsingular matrix $M$.

\begin{definition}\label{defeig}
$\mathcal S$
is the {\em invariant subspace} of a
square matrix $M$
if $M\mathcal S=\{M{\bf v}:{\bf v}\in \mathcal S\}\subseteq \mathcal S$.
A scalar  $\lambda$ is an
{\em eigenvalue}  of a matrix $M$
 associated with an {\em eigenvector}  ${\bf v}$ if
$M{\bf v}=\lambda{\bf v}$.
All eigenvectors associated with an
eigenvalue $\lambda$ of $M$ form an eigenspace
$\mathcal S(M,\lambda)$, which is an invariant space.
Its dimension $d$
 is the {\em geometric multiplicity} of $\lambda$.
The eigenvalue is simple if its multiplicity is $1$. The set
$\Lambda(M)$ of all eigenvalues of a matrix $M$ is called its {\em
spectrum}.
\end{definition}


\subsection{The Companion Matrix and the Frobenius Algebra}\label{scmpn}


 $$C_p=\begin{pmatrix}
        0   &       &       &   & -p_0/p_n \\
        1   & \ddots    &       &   & -p_1/p_n \\
            & \ddots    & \ddots    &   & \vdots    \\
            &       & \ddots    & 0 & -p_{n-2}/p_n \\
            &       &       & 1 & -p_{n-1}/p_n \\
    \end{pmatrix}
$$ 
is the {\em companion matrices} of the polynomial $p(x)$ of
(\ref{eqpoly}).
$p(x)=c_{C_p}(x)=\det(xI_n-C_p)$ is the  {\em
characteristic polynomial} of $p(x)$. Its roots form the spectrum of
$C_p$, and so real root-finding for the polynomial $p(x)$ turns into real
eigen-solving for the  matrix $C_p$. 


\begin{theorem}\label{facfr} ({\em The Cost of Computations in the Frobenius Matrix Algebra}, 
cf. \cite{C96} or \cite{P05}.)
The companion matrix $C_p\in \mathbb C^{n\times n}$ of a polynomial $p(x)$ of (\ref{eqpoly})
generates Frobenius matrix algebra.
One needs  $O(n)$ flops for  addition,
$O(n\log (n))$ flops for multiplication, and $O(n\log^2 (n))$ flops
for inversion  in  this algebra. One
needs $O(n\log (n))$ flops to multiply a matrix in this algebra
  by a vector.
\end{theorem}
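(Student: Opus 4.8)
The plan is to recall the standard structure theory of the Frobenius (companion) matrix algebra and then track the cost of each ring operation through the correspondence with polynomial arithmetic modulo $p(x)$. First I would observe that $C_p$ is nonderogatory, so its centralizer — the algebra of all matrices commuting with $C_p$ — is exactly $\mathbb{C}[C_p]$, the set of polynomials in $C_p$ of degree at most $n-1$, which is an $n$-dimensional commutative algebra isomorphic to $\mathbb{C}[x]/(p(x))$ via $a(x)\bmod p(x)\mapsto a(C_p)$. Under this isomorphism a generic element $M=a(C_p)$ of the algebra is represented compactly by the coefficient vector of $a(x)$, an $n$-vector; addition of algebra elements is then coordinatewise, giving the $O(n)$ bound immediately.

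Next I would handle multiplication. The product $a(C_p)b(C_p)$ corresponds to $(a(x)b(x))\bmod p(x)$: one forms the degree-$\le 2n-2$ product $a(x)b(x)$ and reduces it modulo $p(x)$. Polynomial multiplication of two degree-$n$ polynomials costs $O(n\log n)$ flops via FFT, and the reduction modulo $p(x)$ (a single division with remainder) also costs $O(n\log n)$ flops by the standard Sieveking–Kung / Newton-iteration fast-division algorithm — here one precomputes the reciprocal of the reversed $p(x)$ once. So multiplication in the algebra is $O(n\log n)$. For inversion, to invert $a(C_p)$ one needs the inverse of $a(x)$ modulo $p(x)$; when $\gcd(a,p)=1$ this is obtained by the fast extended Euclidean algorithm (half-GCD), which runs in $O(n\log^2 n)$ flops — this accounts for the extra logarithmic factor. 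Finally, multiplying $M=a(C_p)$ by an arbitrary vector $\mathbf{v}$ amounts to computing $a(x)\cdot v(x)\bmod p(x)$ where $v(x)$ is the polynomial with coefficient vector $\mathbf{v}$ (this uses that $C_p$ acts as multiplication by $x$ on $\mathbb{C}[x]/(p(x))$ in the standard basis), which again is one FFT multiplication plus one fast division, hence $O(n\log n)$ flops.

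The main obstacle, and the only nonroutine point, is the precise cost bookkeeping for the reduction modulo $p(x)$: one must note that $p_n\ne 0$ makes $p$ invertible so that fast division applies, and that the needed reciprocal power series of $x^n p(1/x)$ is computed once (in $O(n\log n)$) and amortized, so it does not inflate the per-operation bounds. Everything else is a direct appeal to the isomorphism $\mathbb{C}[C_p]\cong\mathbb{C}[x]/(p(x))$ together with the classical FFT-based bounds for polynomial multiplication, division with remainder, and extended GCD, exactly as in the cited references \cite{C96,P05}. I would therefore present the argument as: (1) establish the isomorphism and the compact vector representation; (2) read off addition; (3) reduce multiplication and matrix-by-vector products to one FFT multiplication plus one fast division; (4) reduce inversion to one fast extended-Euclidean computation; (5) collect the bounds.
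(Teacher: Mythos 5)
Your proposal is correct and follows exactly the standard argument that the paper relies on (the paper itself gives no proof of Theorem \ref{facfr}, deferring to the cited references): identify the algebra generated by the nonderogatory matrix $C_p$ with $\mathbb{C}[x]/(p(x))$, note that $C_p$ acts as multiplication by $x$ in the monomial basis, and invoke FFT-based multiplication, fast division with remainder, and the fast extended Euclidean algorithm for the respective $O(n)$, $O(n\log n)$, and $O(n\log^2 n)$ bounds. Your attention to the precomputation of the reciprocal of the reversed $p(x)$ and to the condition $\gcd(a,p)=1$ for invertibility is exactly the right bookkeeping; nothing is missing.
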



\subsection{Decreasing the Size of an Eigenproblem}\label{seig}


Next we reduce eigen-solving for the matrix $C_p$ to
the study of its invariant space generated by the $r$ eigenspaces
associated with the $r$ real eigenvalues. The following theorem is
basic for this step.


\begin{theorem}\label{thsubs} ({\em Decreasing the Eigenproblem Size to the Dimension of
an Invariant Space}, cf.
 \cite[Section 2.1]{W07}.)
Let $U\in \mathbb C^{n\times r}$,
$\mathcal R(U)=\mathcal U$, and $M\in\mathbb C^{n\times n}$.
 Then (i) $\mathcal U$ is an invariant space of $M$
if and only if there exists a matrix $L\in\mathbb C^{k\times k}$
such that $MU=UL$ or equivalently if and only if $L=U^{(I)}MU$,
(ii) the matrix $L$ is unique (that is, independent
of the choice of the left inverse $U^{(I)}$) if $U$
is a matrix basis for the space $\mathcal U$,
(iii) $\Lambda(L)\subseteq \Lambda(M)$,
(iv) $L=U^HMU$ if $U$ is a unitary matrix, and
(v) $MU{\bf v}=\lambda U{\bf v}$
if $L{\bf v}=\lambda {\bf v}$.
\end{theorem}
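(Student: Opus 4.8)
The plan is to prove the five parts essentially in the order stated, since each builds on the previous ones, and to lean on the single idea that $MU=UL$ is the matrix encoding of the defining property of an invariant space once we fix a basis.

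First I would prove part (i). If $\mathcal U=\mathcal R(U)$ is invariant under $M$, then each column $M{\bf u}_j$ of $MU$ lies in $\mathcal U=\mathcal R(U)$, hence is a linear combination of the columns of $U$; collecting the coefficient vectors as the columns of a matrix $L$ gives $MU=UL$. Conversely, if $MU=UL$ for some $L$, then for any ${\bf v}\in\mathcal U$, write ${\bf v}=U{\bf w}$, and $M{\bf v}=MU{\bf w}=UL{\bf w}\in\mathcal R(U)=\mathcal U$, so $\mathcal U$ is invariant. Finally, given $MU=UL$ and any left inverse $U^{(I)}$ with $U^{(I)}U=I_r$, multiplying on the left yields $U^{(I)}MU=U^{(I)}UL=L$; conversely if $L=U^{(I)}MU$ one does not immediately get $MU=UL$ without knowing $\mathcal U$ is invariant, so I would phrase this equivalence carefully — the statement "$MU=UL$ for some $L$" is equivalent to "$L:=U^{(I)}MU$ satisfies $MU=UL$", and both are equivalent to invariance. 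For part (ii), if $U$ is a matrix basis (full column rank $r$), then $U$ has a left inverse and, more to the point, $MU=UL$ determines $L$ uniquely: if $UL_1=UL_2$ then $U(L_1-L_2)=0$, and full column rank forces $L_1=L_2$. Hence $L=U^{(I)}MU$ is the same matrix regardless of which left inverse is used.

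Part (iii): suppose $L{\bf v}=\lambda{\bf v}$ with ${\bf v}\ne{\bf 0}$. Then $M(U{\bf v})=(MU){\bf v}=(UL){\bf v}=U(L{\bf v})=\lambda U{\bf v}$, and $U{\bf v}\ne{\bf 0}$ because $U$ has full column rank; hence $\lambda\in\Lambda(M)$. This shows $\Lambda(L)\subseteq\Lambda(M)$ and simultaneously gives part (v), which is literally the displayed computation $MU{\bf v}=\lambda U{\bf v}$. Part (iv) is the observation that a unitary (here, isometric with orthonormal columns) $U$ satisfies $U^HU=I_r$, so $U^H$ is a left inverse and the formula from (i) specializes to $L=U^HMU$.

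I do not expect a genuine obstacle here; the one point that needs care is the precise reading of the biconditional in part (i) — namely that "$\mathcal U$ invariant" is equivalent to "$L=U^{(I)}MU$ works", not to "the formula $L=U^{(I)}MU$ can be written down" (which is always possible). I would state explicitly that $U^{(I)}$ denotes a left inverse (so $U$ must have full column rank for one to exist, consistent with the hypothesis $U\in\mathbb C^{n\times r}$ being intended as a matrix basis), and that the equivalence "$MU=UL$ for some $L$" $\iff$ "$MU=UU^{(I)}MU$" holds because $UU^{(I)}$ acts as the identity on $\mathcal R(U)$. Everything else is a two-line linear-algebra verification, and the uniqueness in (ii) is exactly what lets parts (iii)–(v) speak of "the" matrix $L$ unambiguously.
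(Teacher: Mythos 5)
Your proof is correct; the paper itself supplies no proof of this theorem, deferring instead to \cite[Section 2.1]{W07}, and your argument is exactly the standard one: columns of $MU$ lie in $\mathcal R(U)$ iff $MU=UL$, left-multiplication by $U^{(I)}$ and full column rank give the formula and uniqueness of $L$, and the computation $MU{\bf v}=UL{\bf v}=\lambda U{\bf v}$ yields (iii) and (v), with (iv) the specialization $U^{(I)}=U^H$. Your care about the precise reading of the biconditional in (i) and the implicit full-column-rank (matrix basis) hypothesis is well placed --- note also that the theorem's ``$L\in\mathbb C^{k\times k}$'' should read $r\times r$.
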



By virtue of the following theorem,
 a matrix function shares its
 invariant spaces with the matrix $C_p$, and so 
we can facilitate the computation of the desired invariant space
of $C_p$ if we reduce the task to the case of an appropriate matrix
function, for which the solution is simpler.




\begin{theorem}\label{thsmf} ({\em The Eigenproblems for a Matrix
and Its Function}.) Suppose $M$ is a square matrix, a rational
function $f(\lambda)$ is defined  on its spectrum, and $M{\bf
v}=\lambda{\bf v}$. Then (i) $f(M){\bf v}=f(\lambda){\bf v}$. (ii)
Let $\mathcal U$ be the eigenspace of the matrix $f(M)$ associated
with its eigenvalue $\mu$. Then this is an invariant space of the
matrix $M$ generated by its eigenspaces associated with all its
eigenvalues $\lambda$ such that $f(\lambda)=\mu$. (iii) The space
$\mathcal U$ is associated with a single eigenvalue of $M$ if
$\mu$ is a simple eigenvalue of $f(M)$.
\end{theorem}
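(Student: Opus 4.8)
The plan is to prove the three parts in order, in each case reducing everything to the action of $M$ on its (generalized) eigenspaces. None of the parts is deep; the only place that genuinely needs care is the reverse inclusion in part (ii).

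For part (i), I would first handle a polynomial $f$: a one-line induction on $k$ gives $M^k{\bf v}=\lambda^k{\bf v}$, and linearity extends this to $g(M){\bf v}=g(\lambda){\bf v}$ for every polynomial $g$. For a rational $f=g/h$ written in lowest terms, the hypothesis that $f$ is defined on the spectrum of $M$ means $h$ does not vanish at any eigenvalue, so $h(M)$ is invertible; moreover $h(M){\bf v}=h(\lambda){\bf v}$ with $h(\lambda)\ne 0$ yields $h(M)^{-1}{\bf v}=h(\lambda)^{-1}{\bf v}$. Combining, $f(M){\bf v}=h(M)^{-1}g(M){\bf v}=h(\lambda)^{-1}g(\lambda){\bf v}=f(\lambda){\bf v}$.

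For part (ii), I would start from the observation that $f(M)$ commutes with $M$ (clear for polynomials, and $h(M)^{-1}$ also commutes with $M$), so that $\mathcal U=\ker(f(M)-\mu I)$ is automatically an invariant space of $M$. One inclusion is immediate from part (i): any eigenvector ${\bf v}$ of $M$ with $M{\bf v}=\lambda{\bf v}$ and $f(\lambda)=\mu$ satisfies $f(M){\bf v}=\mu{\bf v}$, so the subspace spanned by $\bigcup_{f(\lambda)=\mu}\mathcal S(M,\lambda)$ is contained in $\mathcal U$. For the reverse direction I would decompose $\mathbb C^n$ into the generalized eigenspaces of $M$; $f(M)$ preserves each of them, and on the generalized eigenspace at $\lambda$ it acts as $f(\lambda)I$ plus a nilpotent part, so $f(M)-\mu I$ is invertible there whenever $f(\lambda)\ne\mu$. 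This confines $\ker(f(M)-\mu I)$ to the sum of the generalized eigenspaces at those $\lambda$ with $f(\lambda)=\mu$, which identifies $\mathcal U$ as an $M$-invariant space supported exactly on those eigenvalues; when $M$ is diagonalizable (e.g.\ the companion matrix of a squarefree polynomial, which is the case of interest) the generalized eigenspaces coincide with the eigenspaces and one obtains the exact equality $\mathcal U=\bigoplus_{f(\lambda)=\mu}\mathcal S(M,\lambda)$.

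For part (iii), if $\mu$ is a simple eigenvalue of $f(M)$ then $\dim\mathcal U=1$ by definition; by part (ii) the nonzero subspace $\bigoplus_{f(\lambda)=\mu}\mathcal S(M,\lambda)$ must then be all of $\mathcal U$, forcing a single eigenvalue $\lambda$ of $M$ with $f(\lambda)=\mu$, a one-dimensional eigenspace $\mathcal S(M,\lambda)=\mathcal U$, and hence the stated conclusion. The inductions and commutation checks in (i) and the opening of (ii) are routine; the one genuine obstacle is the reverse inclusion in (ii), where invoking the generalized-eigenspace (or Jordan) decomposition is needed and where a non-diagonalizable $M$ forces the sum of eigenspaces to be read as contained in, rather than equal to, $\mathcal U$ — a gap that vanishes for the diagonalizable matrices used throughout the paper.
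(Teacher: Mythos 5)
Your proof is correct, and it supplies considerably more than the paper does: the paper's entire argument is the single sentence ``We readily verify part (i), which implies parts (ii) and (iii).'' Your part (i) is exactly the intended routine verification (induction for powers, linearity, and invertibility of $h(M)$ from the hypothesis that $f$ is defined on the spectrum). Where you genuinely go beyond the paper is in part (ii): the forward inclusion $\bigcup_{f(\lambda)=\mu}\mathcal S(M,\lambda)\subseteq\mathcal U$ is indeed an immediate consequence of (i), but the reverse inclusion is \emph{not} implied by (i) alone --- it requires, as you say, restricting $f(M)-\mu I$ to the generalized eigenspaces of $M$ and observing that it is invertible on those with $f(\lambda)\ne\mu$. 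You are also right to flag that for a non-diagonalizable $M$ the kernel of $f(M)-\mu I$ can strictly contain the span of the eigenspaces $\mathcal S(M,\lambda)$ with $f(\lambda)=\mu$ (e.g.\ $M$ a single nilpotent Jordan block and $f(x)=x^2$ gives $f(M)=0$ with $\mathcal U$ the whole space but a one-dimensional eigenspace of $M$), so the literal wording ``generated by its eigenspaces'' is exact only in the diagonalizable case; what survives in general is that $\mathcal U$ is an $M$-invariant space supported precisely on the eigenvalues $\lambda$ with $f(\lambda)=\mu$, which is all that parts (iii) and the paper's subsequent use of the theorem require. Your part (iii) argument (a simple $\mu$ forces $\dim\mathcal U=1$, hence a single $\lambda$) is correct under the paper's convention that ``simple'' means geometric multiplicity one.
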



\begin{proof}
We readily verify part (i), which implies parts (ii) and (iii).
\end{proof}

Suppose we have computed a matrix basis
 $U\in \mathbb C^{n\times r}$  for
an  invariant space  $\mathcal U$ of  a matrix function $f(M)$ of
an $n\times n$ matrix $M$. By virtue of Theorem \ref{thsmf},
this is  a matrix basis of an invariant space of the matrix $M$.
We can first compute a left inverse $U^{(I)}$
or the
orthogonalization $Q=Q(U)$
and then
approximate the eigenvalues of $M$
associated with this eigenspace as the eigenvalues of the
$r\times r$ matrix $L=U^{(I)}MU=Q^HMQ$
(cf. Theorem \ref{thsubs}).

Given
 an approximation $\tilde \mu$ to a simple eigenvalue
 of a matrix function $f(M)$,  we can compute an approximation $\tilde {\bf u}$
to an eigenvector
${\bf u}$ of the matrix $f(M)$ associated with this eigenvalue, recall from 
part (iii) of Theorem
\ref{thsmf} that this is also an eigenvector of the matrix $M$,
associated with its simple eigenvalue, and
approximate this  eigenvalue by the Rayleigh Quotient
$\frac{\tilde {\bf u}^TM\tilde {\bf u}}{\tilde {\bf u}^T\tilde {\bf u}}$.


\subsection{Some Maps in the Frobenius Matrix Algebra}\label{smbms3}


Part (i) of
Theorem \ref{thsmf} implies that 
for a polynomial $p(x)$ of (\ref{eqpoly}) and a rational function
$f(x)$ defined on the set $\{x_i\}_{i=1}^n$ of its roots, the
rational matrix function $f(C_p)$ has the spectrum
$\Lambda(f(C_p))=\{f(x_i)\}_{i=1}^n$. In particular, the maps
$$C_p\rightarrow C_p^{-1},~C_p\rightarrow aC_p+bI,~
C_p\rightarrow C_p^2,~C_p\rightarrow \frac{C_p+C_p^{-1}}{2},~{\rm and}~
C_p\rightarrow \frac{C_p-C_p^{-1}}{2}$$
induce the maps of the eigenvalues of the matrix $C_p$, and thus induce
 the maps of the roots of its characteristic polynomial $p(x)$
given by the equations
$$y=1/x,~y=ax+b,~y=x^2,~y=0.5(x+1/x),~{\rm and}~y=0.5(x-1/x),$$
respectively.
By using the reduction modulo $p(x)$,
define the five dual maps
\begin{eqnarray*}
y=(1/x) \mod p(x),~y=ax+b\mod p(x),~
y=x^2\mod p(x),\\
~y=0.5(x+1/x)\mod p(x),~
{\rm and}~y=0.5(x-1/x)\mod p(x),
\end{eqnarray*}
where $y=y(x)$ denotes polynomials.
Apply the two latter maps recursively, to
define two iterations with polynomials modulo $p(x)$ as follows,
$y_0=x$, $y_{h+1}=0.5(y_h+1/y_h)\mod p(x)$ (cf. (\ref{eq20l0})) and
$y_0=x,~y_{h+1}=0.5(y_h-1/y_h)\mod p(x)$ (cf. (\ref{eq20l})), $h=0,1,\dots$.
More generally, define the iteration
$y_0=x$, $y_{h+1}=ay_h+b/y_h\mod p(x)$, $h=0,1,\dots$,
 for any pair of scalars $a$ and $b$. 


\section{Real Root-finders}\label{s4}


\subsection{M{\"o}bius Iteration}\label{smbms0}


Theorem \ref{thsqnit} implies that right from the start of
iteration (\ref{eq20l}) the values $x^{(h)}$ converge fast to $\pm \sqrt {-1}$
unless the initial value $x^{(0)}$ is real,
in which case  all iterates $x^{(h)}$ are real. It follows that
right from the start
the values $y^{(h)}=(x^{(h)})^2+1$ converge fast  to 0
unless  $x^{(0)}$ is real, whereas all values $y^{(h)}$ are real and
exceed 1  if 
$x^{(0)}$ is real. Write $t_h(y)=\prod_{j=1}^n(y-(x_j^{(h)})^2-1)$
 and  $v_h(y)=\prod_{j=1}^r(y-(x_j^{(h)})^2-1)$
for $h=1,2,\dots$.
The roots of the polynomials  $t_h(y)$ and $v_h(y)$ are the images of
all roots and of the real roots of the polynomial $p(x)$ of (\ref{eqpoly}), respectively,
produced by
the composition of the maps (\ref{eq20l}) and
$y^{(h)}=(x^{(h)})^2+1$. Therefore
 $t_h(y)\approx y^{2s}v_h(y)$ for large integers $h$
where the polynomial $v_h(y)$ has degree $r$ and 
has exactly $r$ real roots, all 
 exceeding 1, and so
 for large integers $h$, 
the sum of the $r+1$
leading terms of the polynomial $t_h(y)$ closely approximates
the polynomial $y^{2s}v_h(y)$. (To verify that the $2s$
trailing coefficients nearly vanish, we need just $2s$
comparisons.) The above argument shows correctness of the
following algorithm. (One can similarly apply and analyze
 iteration (\ref{eq20l0}).)


\begin{algorithm}\label{alg1} {\rm M{\"o}bius iteration for real root-finding.}

    \item{\textsc{Input:}} two integers $n$ and $r$, $0<r<n$, and
    the coefficients of a polynomial $p(x)$ of equation (\ref{eqpoly})
    where $p(0)\neq 0$.

    \item{\textsc{Output:}}
    approximations to the real roots $x_1,\dots,x_r$ of the polynomial $p(x)$.


    \item{\textsc{Computations:}}
         \begin{enumerate}
         \item
          Write $p_0(x)=p(x)$ and recursively compute the polynomials $p_{h+1}(y)$
      such that $p_{h+1}(y) =p_h(x)~p_h(-1/x)$
     for $y=(x-1/x)/2$ and $h=0,1,\dots$
     (Part (ii) of Theorem  \ref{thmb} combined with Theorem \ref{thsqnit} 
         defines the images of the real and nonreal roots
         of the polynomial $p(x)$ for all $h$.)

         \item
         Periodically, at some selected Stages $k$,
compute the polynomials
  $$t_h(y)=(-1)^nq_k(\sqrt {y+1})~q_h(-\sqrt{y+1})$$ 
where $q_k(y)=p_k(y)/{\rm lc}(p_k)$ (cf. Theorems
\ref{thshsc} and \ref{thdnd}). When the integer $k$ becomes large
enough, so that $2s$ trailing coefficients of the polynomial
$q_k(x)$ vanish or nearly vanish, delete these coefficients
and divide the resulting polynomial by $x^{2s}$,
to obtain a polynomial $v_k(x)$ of degree $r$, which
 is an approximate
 factor  of the polynomial $t_k(x)$ 
and has $r$
real  roots on the ray $\{x:~x\ge 1\}$.

     \item
     Apply one of the algorithms of  \cite{BT90}, \cite{BP98}, and \cite{DJLZ97}
         (cf. Theorem \ref{thmlg}) to
     approximate the $r$ roots of the polynomial $v_k(x)$.

     \item
Extend the descending process
from \cite{P95}, \cite{P02} and \cite
{BP96} to recover approximations to the $r$ roots $x_1$,
$\dots,x_r$ of the polynomial $p_0(x)=p(x)$. At first, 
having the $r$ roots $w_j$  of the polynomial  $v_k(x)$ approximated, 
compute the
 $2r$ values $\pm \sqrt {w_j-1}$, $j=1,\dots,r$.
 Then select among them the $r$ values $x_j^{(k)}$, $j=1,\dots,r$,
by applying one of the proximity tests of Section \ref{srrd}
 to 
the polynomial $q_k(y)$ at all of these $2r$ values.
(The $r$ selected values
 approximate the $r$ common real roots of the polynomials
$q_k(y)$ and $p_k(y)$.)
Compute the  $2r$ values $x_j^{(k)}\pm \sqrt{(x_j^{(k)})^2+1}$, $j=1,\dots,r$.
By virtue of part (i) of Theorem  \ref{thmb}, $r$ of these values approximate 
the $r$ real roots of the polynomial
 $p_{k-1}(x)$. Select these approximations by applying one of
the proximity tests of  Section \ref{srrd}
 to 
the polynomial  $p_{k-1}(x)$ at all of the $2r$ candidate values.
Continue  recursively to descend down to the $r$ real roots of $p_0(x)=p(x)$.
The process is not ambiguous because only
$r$ roots of the polynomial $p_{h}(x)$ are real for each
$h$, by virtue of Theorem \ref{thsqnit}.

    \end{enumerate}
    \end{algorithm}

 Like  lifting Stage 1, descending Stage 4 involves order of $kn\log (n)$ flops,
which also bounds the overall cost of performing the algorithm.

\begin{remark}\label{rerfn} ({\rm Refinement by means of Newton's iteration.})
For every $h$, $h=k,k-1,\dots,0$, one can apply Newton's iteration
$x_{j,i+1}^{(h)}=x^{(h)}-p(x_{j,i}^{(h)})/p'(x_{j,i}^{(h)})$, $h=0,1,\dots$,
$i=0,1,\dots,l$,
concurrently at the $r$ approximations $x_j^{(h)}$, $j=1,\dots,r$,
to the $r$ real roots of the polynomial $p_h(x)$. 
We can perform  
$l$ iteration loop by using $O(nl\log^2(r))$ flops,
that is $O(n\log^2(r))$ flops per loop
(cf. \cite[Section 3.1]{P01}),  adding this to
the overall arithmetic cost of order $kn\log(n)$
for performing the algorithm.  We can 
perform
the proximity tests of Stage 4
of the algorithm 
by applying Newton's iteration at all $2r$
candidate approximation points. Having selected 
$r$ of them, we 
can continue applying the iteration at these points,
 to refine the approximations.
\end{remark}

\begin{remark}\label{redgn} ({\rm Countering Degeneracy.})
If $p(0)=p_0=\dots=p_m=0\neq p_{m+1}$, then we should output the
real root $x_0=0$ of multiplicity $m$ and apply the algorithm to
the polynomial $p(x)/x^m$ to approximate the other real roots.
Alternatively we can apply the algorithm to the polynomial
$q(x)=p(x-s)$ for a shift value $s$ such that $q(0)\neq 0$. With
probability 1,  $q(0)\neq 0$ for Gaussian random variable $s$, 
but we can approximate the root radii of the polynomial
$p(x)$ (cf. Theorem \ref{thrrd})
 and then deterministically find  a 
 scalar $s$ such
that $q(x)$ has no roots near 0.  
\end{remark}

\begin{remark}\label{refr2} ({\rm Saving the Recursive Steps of Stage 1.})
The first goal of the algorithm is the computation of  
a polynomial $v_k(x)$ of
degree $r$ that has $r$ real roots and is 
an approximate factor  of the polynomial $t_k(x)$.
 If the assumptions of Theorem \ref{thsc}are satisfied for $t(x)=t_k(x)$
for a smaller integer $k$
we can compute  a polynomial
 $v_k(x)$ for this $k$ decreasing the overall computational
 cost.
For a fixed $k$ we can verify the assumptions 
by using $O(n\log^2(n))$ flops
(by applying the root radii
algorithm of Theorem \ref{thrrd}),
and so it is not too costly to test  even all integers
$k$ in the range, unless the range is large.
BY using the binary search
 for the minimum integer $k$
 satisfying  Theorem \ref{thsc}, 
 we would need only $O(\log (n))$ tests, that is,
$O(n\log^3(n))$ flops.
\end{remark}

\begin{remark}\label{renrlr} ({\rm Handling the Nearly Real Roots.})
The integer parameter
$k$ and the overall arithmetic cost of performing
the algorithm are large
if the value
$2^{-d}=\min_{j=r+1}^n |\Im x_j|$ is small. 
We can counter this deficiency by
splitting out
from the polynomial $t_k(x)$
its factor $v_{k,+}(x)$ 
of degree $r_+>r$ that has  $r_+$ real and nearly real roots
if the other nonreal roots lie sufficiently far from the real axis.
Our convergence analysis and
the recipes for splitting out the factor $v_k(x)$
(including the previous remark)
can be readily extended.  
If the integer $r_+$ is small,
we can compute all the $r_+$ roots of
the polynomial $v_{k,+}(x)$ 
at a low cost and then select the $r$ real roots among them.) 
Even if the integer $r_+$ is large, 
but  all of $r_+$ roots of the polynomial $v_{k,+}(x)$
lie on or close enough to the real axis,
we can approximate these roots at a low cost by
applying the modified Laguerre algorithm of \cite{DJLZ97}.
\end{remark}

\begin{remark}\label{renrr} ({\rm The Number of Real Roots.})
We assume that we know the number $r$ of the real roots
(e.g., supplied by noncostly algorithms of computer
algebra), but we can compute
this number as by-product of Stage 2, and similarly for our other
algorithms. With a proper policy we can
compute the integer $r$ by
testing  at most $2+2\lceil \log_2(r)\rceil$
 candidates in the range $[0,2r-1]$.
\end{remark}


\subsection{An Extended Matrix Sign Iteration}\label{smbms1}


The known upper bounds on the condition numbers of
the roots of the polynomials $p_{k}(y)$  grow exponentially
 as $k$ grows large (cf. \cite[Section 3]{BP96}).
If the bounds are actually sharp,
Algorithm \ref{alg1}
is prone to numerical stability problems already for
moderately large integers $k$.
We can avoid this potential deficiency by replacing the 
iteration of Stages 1 and 2 by the dual matrix iteration
\begin{equation}\label{eqmsn}
Y_0=C_p,~Y_{h+1}=0.5(Y_h-Y_h^{-1})~{\rm for}~ h=0,1,\dots.
\end{equation}
It extends the  {\em matrix sign} iteration
$\widehat Y_{h+1}=0.5(\widehat Y_h+\widehat Y_h^{-1})$ for $h=0,1,\dots$
(cf.  (\ref{eq20l0}), (\ref{eq20l}), part (ii) of our Theorem \ref{thmb}, and \cite{H08}) and maps the eigenvalues 
of the matrix $Y_0=C_p$ according to  (\ref{eq20l}). So
 Stage 1 of
Algorithm \ref{alg1} maps
the characteristic polynomials
of the above matrices $Y_h$. Unlike the case of the latter map,
working with matrices enables us to recover the desired real
eigenvalues of the matrix $C_p$ by means of our recipes of Section
\ref{s3}, without recursive descending.  

\begin{algorithm}\label{alg2} {\rm Matrix sign iteration modified for real eigen-solving.}

    \item{\textsc{Input and Output} as in Algorithm \ref{alg1}, except
    that FAILURE can be output with a probability close to 0.}

    \item{\textsc{Computations:}}
         \begin{enumerate}
         \item
         Write $Y_0=C_p$ and recursively compute the matrices $Y_{h+1}$ of
(\ref{eqmsn})
 for $h=0,1,\dots$
($2s$ eigenvalues of
the matrix $Y_h$ converge to $\pm \sqrt {-1}$ as $h\rightarrow \infty$,
whereas its $r=n-2s$ other eigenvalues are real for all $h$, 
by virtue of Theorem \ref{thsqnit}.)

         \item
         Fix a 
	 sufficiently 
	 large integer $k$ and compute the matrix $Y=Y_k^2+I_n$.
         (The map $Y_0=C_p\rightarrow Y$ sends all nonreal eigenvalues of $C_p$
         into a small neighborhood of the origin 0 and sends all real
         eigenvalues of $C_p$ into the ray $\{x:~x\ge 1\}$.)

         \item
          Apply the randomized algorithms of \cite{HMT11}
         to compute the  numerical rank of the matrix $Y$.
  The rank is at least $r$, and if it exceeds $r$, then  go back to Stage 1.
     If it is equal to $r$, then generate a  standard Gaussian random $n\times r$ matrix $G$
         and compute the matrices $H=YQ(G)$ and $Q=Q(H)$.
   (The analysis of
preprocessing with  Gaussian random multipliers in
 \cite[Section 4]{HMT11}, \cite[Section 5.3]{PQYa} shows that,
 with a probability close to 1, the columns of the matrix
$Q$ closely approximate a unitary basis of the invariant space of
the matrix $Y$ associated with its $r$ absolutely largest
eigenvalues, which are the images of the real eigenvalues of the
matrix $C_p$. Having this approximation is equivalent to having
 a small upper bound on the residual norm
 $||Y-QQ^HY||$  \cite{HMT11}, \cite{PQYa}.)
Verify the latter bound. If the verification
fails (which is  unlikely),
 output FAILURE and stop the computations.

     \item
Otherwise compute and output
approximations to the $r$ eigenvalues of the $r\times r$ matrix  $L=Q^HC_pQ$.
They approximate the real roots
of the polynomial $p(x)$. (Indeed, by virtue of  Theorem \ref{thsmf},
$Q$ is an approximate matrix basis for the invariant space of the matrix $C_p$
associated with its $r$ real eigenvalues. Therefore, by virtue of
 Theorem \ref{thsubs}, the $r$ 
eigenvalues of the matrix $L$ approximate the $r$ real eigenvalues of 
the matrix $C_p$.)
    \end{enumerate}
    \end{algorithm}


Stages 1 and 2 involve
$O(kn\log^2(n))$ flops by virtue of Theorem \ref{facfr}.
 This exceeds the estimate for
 Algorithm \ref{alg1} by a factor of $\log(n)$.
Stage 3  adds $O(nr^2)$ flops and  the cost $a_{rn}$ of
 generating $n\times r$ standard Gaussian random matrix.
The cost bounds are $O(r^3)$ at Stage 4 and 
$O((kn\log^2(n)+nr^2)+a_{rn}$ overall.


\begin{remark}\label{renrei} ({\rm Counting Real Eigenvalues.})
The binary search can produce  the number of real eigenvalues
as the numerical rank of the matrices $Y_k^2+I$
when this rank stabilizes.
\end{remark}


\begin{remark}\label{refracc} ({\rm Acceleration by Using Random Circulant Multiplier.})
We can decrease the  arithmetic cost of Stage 3 to $a_{n+r}+O(n\log(n))$
and can perform only $O(kn\log^2(n)+nr^2)+a_{r+n}$ flops overall
if we replace an $n\times r$ standard Gaussian random multiplier
by the product $\Omega CP$ where $\Omega$ and $C$ are $n\times n$ matrices,
 $\Omega$ is the matrix of the discrete Fourier transform,
$C$ is a random circulant matrix,
and $P$ is an $n\times l$ random permutation matrix,
for
a sufficiently large $l$ of order $r\log (r)$.
See  \cite[Section 11]{HMT11}, \cite[Section 6]{PQYa}
for the analysis and for the supporting probability estimates. They are
only
slightly less favorable than in the case of a  Gaussian random multiplier.
\end{remark}

\begin{remark}\label{refracclrb} ({\rm Acceleration by Means of Scaling.})
We can dramatically accelerate the initial convergence of
Algorithm \ref{alg2} by applying {\em determinantal scaling}  (cf.
\cite{H08}), that is, by computing the matrix $Y_1$ as follows,
$Y_{1}=0.5(\nu Y_0-(\nu Y_0)^{-1})$ for
$\nu=1/|\det(Y_0)|^{1/n}=|p_n/p_0|$,
 $Y_0=C_{p}$.
\end{remark}

\begin{remark}\label{refr} (Hybrid Matrix and Polynomial Algorithms.)
Can we
modify Algorithm \ref{alg2} to keep its advantages
but to decrease the arithmetic cost of its Stage 1
to the level $kn \log (n)$ of Algorithm \ref{alg1}?
Let us do this for a large class of input polynomials 
by applying a hybrid algorithm that
 combines the power of 
 Algorithms \ref{alg1} and  \ref{alg2}.
First note that we can replace iteration (\ref{eqmsn})
by any of the iterations
$Y_{h+1}=0.5(Y_h^{3}+3 Y_h)$ and
$Y_{h+1}=-0.125(3 Y_h^{5}+10 Y_h^{3}+15 Y_h)$ for $h=0,1,\dots$
provided that all or almost all nonreal roots of the polynomial $p(x)$
lie in the discs $D(\pm \sqrt{-1},1/2)$.
Indeed right from the start, the
iterations
send the nonreal roots lying
in these discs toward the two points $\pm \sqrt{-1}$
 with quadratic and cubic convergence rates,
respectively. (To prove this, extend the proof of \cite[Proposition
4.1]{BP96}.) Both iterations keep the real roots real,
involve no inversions, and use $O(n\log (n))$ flops per  loop.
These observations
suggest the following policy.
Perform the iterations of Algorithm \ref{alg1}  
as long as the outputs are not corrupted by rounding errors.
(Choose the number of iterations of Algorithm \ref{alg1} heuristically.)
For a large class of inputs,
the iterations (in spite of the above limitation on their number)
bring 
the images of the nonreal eigenvalues of $C_p$ into the basin of convergence 
 of the inversion-free matrix iterations above.
Now let $q_h(x)$ denote the auxiliary polynomial output
by Algorithm \ref{alg1}.
Then approximate its real roots
 by applying one of the  inversion-free iterations above
to its companion matrix $C_{q_h}$.
Descend from these roots
to the real roots of the polynomial $p(x)$
as in  Algorithms \ref{alg1}.
\end{remark}


\subsection{Numerical Stabilization of 
 the Extended Matrix Sign Iteration}\label{snstbz}


The images of nonreal eigenvalues of the matrix $C_p$ converge to
$\pm \sqrt {-1}$ in the iteration of Stage 1
of Algorithm
  \ref{alg2}, but the
images of some real eigenvalues of $C_p$ can come close to 0,
and then the next step of the
iteration would involve an ill conditioned matrix $Y_h$.
This would be a complication
unless we are applying an inversion-free variant of
the iteration of the previous remark.
We can
detect that the matrix $Y_h$ is ill conditioned
 by encountering difficulty in its numerical inversion or
by computing its smallest singular value
 (e.g., by applying the Lanczos algorithm
\cite[Proposition 9.1.4]{GL96}). 
In such cases we can try to avoid problems by
  shifting the matrix  (and its eigenvalues), that is,
by adding to or subtracting from the current matrix $Y_h$
the matrix $s I$ for a reasonably 
small positive scalar $s$. We can select this scalar
 by applying Theorem \ref{thrrd}, heuristic methods, or randomization.

Towards a more radical recipe, 
apply the following modification of Algorithm   \ref{alg2}.

\begin{algorithm}\label{alg2a} {\rm Numerical stabilization of an extended matrix sign iteration.}

    \item{\textsc{Input, Output} and Stages 3 and 4 of \textsc{Computations} are as in Algorithm \ref{alg2}, except that the input includes a small positive  scalar $\alpha$ 
such that no eigenvalues of the matrix $C_p$ have 
imaginary parts close to $\pm \alpha \sqrt {-1}$
(see Remark \ref{realph} below),
    the set of $r$ real roots $x_1,\dots,x_r$ of the polynomial $p(x)$
is replaced by the set of its $r_+$ roots having the imaginary parts in the
range
$[-\alpha,\alpha]$, and the integer $r$ is replaced by the integer $r_+$ throughout.}
  	
    \item{\textsc{Computations:}}
         \begin{enumerate}
         \item
         Apply Stage 1 of Algorithm \ref{alg2} to the two matrices 
         $Y_{0,\pm}=\alpha\sqrt {-1}~I\pm C_p$, producing  two sequences 
	of the matrices $Y_{h,+}$ and  $Y_{h,-}$ for $h=0,1,\dots$. 

         \item
         Fix a 
	 sufficiently 
	 large integer $k$ and compute the matrix $Y=Y_{k,+}+Y_{k,-}$.
 
    \end{enumerate}
    \end{algorithm}

Because of the assumed choice of $\alpha$, the matrices  $\alpha\sqrt {-1}~I\pm C_p$ have
no real eigenvalues, and so the images of all their eigenvalues, that is, the eigenvalues of the matrices
$Y_{k,+}$ and $Y_{k,-}$, converge to 
$\pm \sqrt {-1}$ as $k \rightarrow \infty$. Moreover, one can verify that the eigenvalues
of the matrix $Y_{k,+}+Y_{k,-}$ converge to 0 unless they are the images of the $r_+$ eigenvalues of the
matrix $C_p$ having the imaginary parts in the range $[-\alpha, \alpha]$. The latter   eigenvalues of the matrix  $Y_{k,+}+Y_{k,-}$
converge to $2\sqrt {-1}$. This shows correctness and numerical stability of 
Algorithm \ref{alg2a}.

The algorithm approximates the $r_+$ roots of $p(x)$ 
by using $O(kn\log^2(n)+nr_+^2)+a_{r_+n}$ flops,
versus $O(kn\log^2(n)+nr^2)+a_{rn}$ involved in
Algorithm \ref{alg2}.

\begin{remark}\label{realph}
One can choose a positive $\alpha$ of Algorithm \ref{alg2a}  by applying 
heuristic methods or as follows:  map the two lines $\{x:~\Im x=\pm \alpha\}$
into the unit circle $C(0,1)$, extend these two maps to the
two maps of the  polynomial $p(x)$ into the polynomials 
$q_{\pm}(x)=p(x\pm  \alpha \sqrt {-1})$,
and apply the algorithm of Theorem \ref{thrrd} to these two polynomials.
\end{remark}


\subsection{Square Root Iteration (a Modified Modular Version)}\label{smbms2}


Next we describe another dual polynomial version of Algorithm \ref{alg2}.
It extend the square root iteration $y_{h+1}=\frac{1}{2}(y_h+1/y_h)$, $h=0,1,\dots$.
Compared to Algorithm \ref{alg2}, we first replace
all rational functions in the matrix $C_p$ by the same rational functions
in the variable $x$ and then reduce every function  modulo
the input polynomial $p(x)$.
The reduction does not affect the values of the functions at the roots
of $p(x)$, and so these values are precisely the eigenvalues of
the rational matrix functions involved in Algorithm \ref{alg2}.

\begin{algorithm}\label{alg3} {\rm Square root modular iteration modified for real root-finding.}

    \item{\textsc{Input and Output} as in Algorithm \ref{alg1}.}


    \item{\textsc{Computations:}}
         \begin{enumerate}
         \item
         Write $y_0=x$ and (cf. (\ref{eqmsn})) compute the polynomials
\begin{equation}\label{eqsqrt}
y_{h+1}=\frac{1}{2}(y_h-1/y_h)\mod p(x),~h=0,1,\dots.
\end{equation}

         \item
         Periodically, for selected integers $k$,
compute the polynomials  $t_k=y_k^2+1\mod p(x)$
and $g_k(x)={\rm agcd}(p,t_k)$.

     \item
If $\deg(g_k(x))=n-r=2s$,
compute the polynomial $v_k\approx p(x)/g_k(x)$
of degree $r$. Otherwise  continue the iteration of Stage 1.

     \item
     Apply one of the algorithms of  \cite{BT90}, \cite{BP98},
 and \cite{DJLZ97}
         (cf. Theorem \ref{thmlg}) to
     approximate the $r$ roots $y_1,\dots,y_r$ of the polynomial $v_k$.
 Output these approximations.
        \end{enumerate}
    \end{algorithm}

By virtue of our comments preceding this algorithm, the values of the polynomials
$t_k(x)$ at the roots of $p(x)$ are equal to the images
of the eigenvalues of the matrix $C_p$ in Algorithm \ref{alg2}.
Hence the values of the polynomials $t_k(x)$ at the nonreal roots 
of $p(x)$ 
converge to 0
as $k\rightarrow \infty$, whereas their values at
 the real roots of $p(x)$ stay far from 0. Therefore, for sufficiently large integers $k$,
 ${\rm agcd}(p,t_k)$ turn into the polynomial $\prod_{j=r+1}^n(x-x_j)$.
This implies correctness of the algorithm.
Its asymptotic computational cost is $O(kn\log^2(n))$ plus the cost of computing
${\rm agcd}(p,t_k)$ and
choosing the integer $k$ (see our next remark).

\begin{remark}\label{resmlr}
Compared to Algorithm \ref{alg2}, the latter algorithm
reduces real root-finding essentially
to the computation of agcd$(p,t_k)$,
but the complexity of this computation is not easy to estimate \cite{BB10}.
Moreover,  let us reveal serious problems of numerical
stability for this algorithm and for
the similar algorithms of \cite{C96} and \cite{BP96}.
Consider the case where $r=0$.
Then the polynomial $t(x)$ has degree at most $n-1$,
and its values at the $n$ nonreal roots of the polynomial
$p(x)$ are close to 0. This can only occur if $||t_k(x)||\approx 0$.
\end{remark}

\begin{remark}\label{refr1}
We can concurrently perform
Stages 1 of both Algorithms \ref{alg2} and \ref{alg3}.
The information about the numerical rank at Stage 3 of
Algorithm \ref{alg2} can be
 a guiding rule for the choice of the integer parameter $k$
and computing the polynomials $t_k$,
$g_k$ and $v_k$ of
Algorithm \ref{alg3}. Having the polynomial $v_k$
available, Algorithm \ref{alg3} produces the
approximations to the real roots more readily
than  Algorithm \ref{alg2} does this at its Stage 4.
\end{remark}


\subsection{Cayley Map and Root-squaring}\label{scmrs}


The following algorithm is somewhat similar to Algorithm
\ref{alg1}, but employs repeated squaring of the roots instead of
mapping them into their square roots.

\begin{algorithm}\label{alg4} {\rm Real root-finding with Cayley map and 
repeated root-squaring.}

\item{\textsc{Input and Output} as in Algorithm \ref{alg1},
except that we require that  $p(1)p(\sqrt {-1})\neq 0$.}

\item{\textsc{Computations:}}
         \begin{enumerate}
         \item
 Compute the polynomial $q(x)=(\sqrt{-1}~(x-1)^n\frac{x+1}{x-1})=\sum_{i=0}^nq_ix^i$.
(This is the Cayley map of Theorem \ref{thcl}. It moves the real axis, in particular the real roots of $p(x)$, onto the unit circle $C(0,1)$.)

 \item
 Write $q_0(x)=q(x)/q_n$, fix a sufficiently large integer
$k$, and apply the $k$ squaring steps of Theorem \ref{thdnd},
$q_{h+1}(x)=(-1)^nq_h(\sqrt{x})q_h(-\sqrt{x})$ for
$h=0,1,\dots,k-1$. (These steps keep the images of the real roots of
$p(x)$ on the circle $C(0,1)$ for all $k$, while sending the
images of every other root of $p(x)$ toward either the origin or
the infinity.)

 \item
For a sufficiently large integer $k$,  the polynomial $q_{k}(x)$
approximates the polynomial $x^su_k(x)$ where
$u_k(x)$ 
is a polynomial of degree $r$ 
whose all $r$ roots  lie on the unit circle $C(0,1)$.
 Extract an approximation to this polynomial  from the coefficients
of the polynomial  $q_{k}(x)$.

 \item
Compute the polynomial
$w_k(x)=u_k(\sqrt{-1}~\frac{x+1}{x-1})$.
(This Cayley map sends the images of 
the real roots of the polynomial $p(x)$ from the unit circle
$C(0,1)$ back to the real line.)

 \item
Apply one of the algorithms of  \cite{BT90}, \cite{BP98}, and \cite{DJLZ97}
 to approximate the $r$ real roots $z_1,\dots,z_r$ of the polynomial $w_k(x)$
         (cf. Theorem \ref{thmlg}).

 \item
Apply the Cayley map $w_j^{(k)}=(z_j +\sqrt{-1})/(z_j-\sqrt{-1})$ for $j=1,\dots,r$
to extend Stage 5 to approximating the $r$ roots $x_1^{(k)},\dots,x_r^{(k)}$
of the polynomials $u_k(x)$ and $y_k(x)=x^su_k(x)$
lying on the unit circle $C(0,1)$.

 \item
 Apply the descending process (similar to the ones of \cite{P95}, \cite{P02},
and of our Algorithm \ref{alg1}) to approximate the $r$ roots $x_1^{(h)},\dots,x_r^{(h)}$
of the polynomials $q_{h}(x)$ lying on the unit circle $C(0,1)$ for $h=k-1,\dots,0$.

 \item
 Approximate the $r$ real roots
$x_j=\sqrt{-1}(x_j^{(0)} +1)/(x_j^{(0)}-1)$, $j=1,\dots,r$,
of the polynomials $p(x)$.
 \end{enumerate}
\end{algorithm}

Our analysis of Algorithm \ref{alg1} (including its complexity estimates
and the comments and recipes in Remarks \ref{redgn}--\ref{renrr})
can be extended to Algorithm \ref{alg4}.


\subsection{A Tentative Approach to Real Root-finding by Means of 
Root-radii Approximation}\label{srra}


\begin{algorithm}\label{algrrnt} ({\em Real root-finding by means of root radii approximation}.)
\item{\textsc{Input and Output} as in Algorithm \ref{alg1}.}

\item{\textsc{Computations:}}
         \begin{enumerate}
         \item
 Compute 
approximations $\tilde r_1,\dots,\tilde r_n$
to the  root radii of a polynomial $p(x)$ of (\ref{eqpoly}) (see Theorem \ref{thrrd}). (This defines $2n$ candidates points
$\pm \tilde r_1,\dots,\pm \tilde r_n$ for the approximation of the $r$
real roots $x_1,\dots,x_r$.)

\item
At all of these $2n$ points, apply one of the proximity 
tests of Section \ref{srrd},
 to select $r$ approximations to the $r$ real roots of 
the polynomial $p(x)$.

\item
 Apply Newton's iteration
$x^{(h+1)}=x^{(h)}-p(x^{(h)})/p'(x^{(h)}),~ h=0,1,\dots$,
concurrently at these $r$ points,
expecting to refine quickly the approximations to 
the isolated simple real roots.
  \end{enumerate}
\end{algorithm}


\section{Real Eigen-solving for a Real Nonsymmetric Matrix}\label{sreig}


Suppose we are seeking the real eigenvalues of
a real nonsymmetric $n\times n$ matrix $M$.
We can substitute this matrix
for the input matrix $C_p$ of 
Algorithm \ref{alg2} or \ref{alg2a} and apply 
the algorithm with no further changes.
The overall arithmetic complexity would grow to $O(kn^3)$ flops, 
but may still be competitive
if the integer $k$ is small, that is, if
the algorithm converges fast for the input matrix $M$. 

Seeking acceleration,
one can first define a similarity transformation 
of the matrix $M$ into a rank structured matrix
whose all
subdiagonal blocks have rank at most 1 \cite{VVM}, \cite{EGH13}.
Then one would only need $O(n^2)$ flops to perform
the first iteration (\ref{eqmsn}), but each 
new iteration (\ref{eqmsn})
would double the upper bound on the maximal rank of the
subdiagonal blocks and thus would increase the estimated
complexity of the next iteration accordingly. So the overall arithmetic
cost would still be of order $kn^3$ flops, unless the integer $k$ is small. 
One is challenged to devise a similarity transformation of a matrix
that would decrease the  maximal rank of its subdiagonal block,
say, from 2 to 1, by using quadratic arithmetic time. 
This would decrease the overall arithmetic cost bound to $O(kn^2)$.

Now consider extension of  Algorithm \ref{alg4} to real eigen-solving. 
We must avoid using high powers of the input and auxiliary matrices because
these powers tend to have numerical rank 1. The following 
algorithm, however, involves 
such powers implicitly, when it computes the auxiliary matrix
$P^m-P^{-m}$
as the product $\prod_{i=0}^{m-1}(P-\omega_m^iP^{-1})$
where $P=(M+\sqrt{-1}~I)(M-\sqrt{-1}~I)^{-1}$, $m$ denotes a fixed
reasonably large integer,
 and
$\omega_m=\exp(2\pi\sqrt {-1}/m)$ is a primitive $m$th root of unity.

\begin{algorithm}\label{alg6} {\rm Real eigen-solving by means of factorization.}

\item{\textsc{Input:}} a real $n\times n$ matrix $M$
having $r$ real eigenvalues  and $s=(n-r)/2$ pairs of 
nonreal complex conjugate eigenvalues, neither of them
is equal to $\sqrt{-1}$.
    \item{\textsc{Output:}} approximations to the real eigenvalues $x_1,\dots,x_r$ of the matrix $M$.

\item{\textsc{Computations:}}
         \begin{enumerate}
         \item
 Compute the matrix $P=(M+\sqrt{-1}~I)(M-\sqrt{-1}~I)^{-1}$.
(This is the matrix version of a Cayley map of Theorem \ref{thcl}.
It moves the real and only the real eigenvalues of the matrix $M$ into 
the eigenvalues of the matrix $P$ lying on the unit circle $C(0,1)$.)

 \item
 Fix a sufficiently large integer
$m$ and compute the matrix $Y=(P^m-P^{-m})^{-1}$
in the following factorized form $\prod_{i=0}^{m-1}(P-\omega_m^iP^{-1})^{-1}$
where $\omega_m=\exp(2\pi\sqrt {-1}/m)$.
 (For any integer $m$ the images of all real eigenvalues of the matrix $M$
have absolute values at least 1/2,
 whereas the 
images of all nonreal eigenvalues of that matrix 
converge to 0 as $m\rightarrow \infty$.)

 \item
Complete the computations as at Stages 3 and 4 
of Algorithm \ref{alg2}.

 \end{enumerate}
\end{algorithm}

The arithmetic complexity of the algorithm is $O(mn^3)$ flops for general matrix $M$,
but decreases to $O(mn^2)$ if  $M$ is a Hessenberg matrix or if the rank of
all its subdiagonal blocks is bounded by a constant. For $M=C_p$
the complexity decreases  to $O(mn)$,
which makes the algorithm attractive for real polynomial root-finding,
as long as it converges for a reasonably small integers $m$.

\begin{remark}\label{rescl}({\rm Scaling and the simplification of the factorizations.})
One 
can apply the algorithm to a scaled matrix $\theta M/||M||$
for a fixed matrix norm $||\cdot||$
and a fixed scalar $\theta$, $0<\theta<1$, say, for $\theta=0.5$. In this case 
the inversion at Stage 1 is applied to a diagonally dominant
matrix. Towards more radical simplification of the algorithm,
one can avoid computing and inverting the matrix $P$ and can instead 
compute the matrix $Y$ in one of the following two equivalent factorized forms,
$$Y=\prod_{i=0}^{m-1}((M^2+I)~F_i(M)^{-1}G_i(M)^{-1})=
\prod_{i=0}^{m-1}(\alpha_iF_i(M)^{-1}+\beta_i G_i(M)^{-1})$$
for
$$F_i(M)=M+\sqrt{-1}~I+\omega_{2m}^i(M-\sqrt{-1}~I)=
(1+\omega_{2m}^i)M+\sqrt{-1}(1-\omega_{2k}^i)I,$$
$$G_i(M)=M+\sqrt{-1}~I-\omega_{2m}^i(M-\sqrt{-1}~I)=
(1-\omega_{2m}^i)M+\sqrt{-1}(1+\omega_{2m}^i)I,$$
some complex scalars $\alpha_i$ and $\beta_i$, and 
$i=0,\dots,m-1$. Then again, one can 
apply the algorithm to a scaled matrix $\gamma M$
for an appropriate scalar $\gamma$
to simplify the solution of linear systems of 
equations with the matrices $F_i(M)$ and $G_i(M)$.
\end{remark}

\begin{remark}\label{redbl}
One can adapt the integer $m$ by doubling it  to produce the desired eigenvalues
if the computations
show that the current integer $m$
 is not large enough.
The previously computed  matrices $F_i(M)$ and $G_i(M)$ can be reused.
\end{remark}


\section{Numerical Tests}\label{snm}


Three series of numerical tests have been performed in the Graduate
Center of the City City University of New York by Ivan Retamoso
and Liang Zhao. In all three series they tested Algorithm \ref{alg2},
and the
results of the  test are quite encouraging.

In the first series of tests, Algorithm \ref{alg2} has been applied to one of the Mignotte
benchmark polynomials, namely to $p(x)=x^n + (100x-1)^3$. It is known that this
polynomial has three ill conditioned roots clustered about $0.01$
and has $n-3$ well conditioned  roots. In the tests, Algorithm \ref{alg2} has output
the roots within the error less than $10^{-6}$ by using 9 iterations for $n=32$ and  $n=64$
and by using 11 iterations for $n=128$ and $n=256$.

In the second series of tests, polynomials $p(x)$ of degree
$n=50,100,150,200$, and $250$ have been generated as the products
$p(x)=f_{1}(x)f_{2}(x)$. Here $f_{1}(x)$ was the $r$th degree  Chebyshev polynomial 
(having $r$ real roots) for $r=8,12,16$, 
and $ f_{2}(x)=\sum^{n-r}_{i=0}a_{i}x^{i}$, $a_{j}$ being i.i.d. standard Gaussian random variables,
for  $j=0,\dots,n-r$.
Algorithm  \ref{alg2} (performed with double precision) was
applied to 100 such polynomials $p(x)$ for each pair
of $n$ and $r$.
 Table \ref{tabhank} displays the output data,
 namely, the average values and standard deviation of 
the numbers of iterations and of the maximum difference between the output values
of the roots and their values produced by MATLAB root-finding
function "roots()".

In the third series of tests,
Algorithm \ref{alg2} 
 approximated the real eigenvalues of a  random real symmetric matrix
 $A=U^{T}\Sigma U$, where $U$ was an orthogonal $n\times n$ standard 
Gaussian random matrix, $\Sigma=\diag(x_{1},\dots,x_{r},y_{1},\dots, y_{n-r})$, 
and $x_{1},\dots,x_{r}$ (resp. $y_{1},\dots, y_{n-r}$) were $r$ i.i.d. standard Gaussian real (resp. non-real) random variables.
 Table \ref{tabhank2} displays the mean and standard deviation of
the number of iterations and the error bounds in these tests
for $n=50,100,150,200,250$ and $r=8,12,16$.

\begin{table}[h]
\caption{Number of Iterations and Error Bounds for 
Algorithm \ref{alg2} on Random Polynomials}
\label{tabhank}
  \begin{center}

    \begin{tabular}{| c | c | c | c | c | c | }

\hline

\bf{n} & \bf{r} & \bf{Iter-mean} & \bf{Iter-std} & \bf{Bound-mean} & \bf{Bound-std} \\ \hline

$50$ & $8$ & $7.44$ & $1.12$ & $4.18\times 10^{-6}$ & $1.11\times 10^{-5}$\\ \hline
$100$ & $8$ & $8.76$ & $1.30$ & $5.90\times 10^{-6}$ & $1.47\times 10^{-5}$\\ \hline
$150$ & $8$ & $9.12$ & $0.88$ & $2.61\times 10^{-5}$ & $1.03\times 10^{-4}$\\ \hline
$200$ & $8$ & $9.64$ & $0.86$ & $1.48\times 10^{-6}$ & $5.93\times 10^{-6}$\\ \hline
$250$ & $8$ & $9.96$ & $0.73$ & $1.09\times 10^{-7}$ & $5.23\times 10^{-5}$\\ \hline

$50$ & $12$ & $7.16$ & $0.85$ & $3.45\times 10^{-4}$ & $9.20\times 10^{-4}$\\ \hline
$100$ & $12$ & $8.64$ & $1.15$ & $1.34\times 10^{-5}$ & $2.67\times 10^{-5}$\\ \hline
$150$ & $12$ & $9.12$ & $2.39$ & $3.38\times 10^{-4}$ & $1.08\times 10^{-3}$\\ \hline
$200$ & $12$ & $9.76$ & $2.52$ & $6.89\times 10^{-6}$ & $1.75\times 10^{-5}$\\ \hline
$250$ & $12$ & $10.04$ & $1.17$ & $1.89\times 10^{-5}$ & $4.04\times 10^{-5}$\\ \hline

$50$ & $16$ & $7.28$ & $5.06$ & $3.67\times 10^{-3}$ & $7.62\times 10^{-3}$\\ \hline
$100$ & $16$ & $10.20$ & $5.82$ & $1.44\times 10^{-3}$ & $4.51\times 10^{-3}$\\ \hline
$150$ & $16$ & $15.24$ & $6.33$ & $1.25\times 10^{-3}$ & $4.90\times 10^{-3}$\\ \hline
$200$ & $16$ & $13.36$ & $5.38$ & $1.07\times 10^{-3}$ & $4.72\times 10^{-3}$\\ \hline
$250$ & $16$ & $13.46$ & $6.23$ & $1.16\times 10^{-4}$ & $2.45\times 10^{-4}$\\ \hline

\end{tabular}

  \end{center}

\end{table}

\begin{table}[h]
\caption{Number of Iterations and Error Bounds for Algorithm 
 \ref{alg2} on Random Matrices}
\label{tabhank2}
  \begin{center}

    \begin{tabular}{| c | c | c | c | c | c | }

\hline

\bf{n} & \bf{r} & \bf{Iter-mean} & \bf{Iter-std} & \bf{Bound-mean} & \bf{Bound-std} \\ \hline

$50$ & $8$ & $10.02$ & $1.83$ & $5.51\times 10^{-11}$ & $1.65\times 10^{-10}$\\ \hline
$100$ & $8$ & $10.81$ & $2.04$ & $1.71\times 10^{-12}$ & $5.24\times 10^{-12}$\\ \hline
$150$ & $8$ & $14.02$ & $2.45$ & $1.31\times 10^{-13}$ & $3.96\times 10^{-13}$\\ \hline
$200$ & $8$ & $12.07$ & $0.94$ & $2.12\times 10^{-11}$ & $6.70\times 10^{-11}$\\ \hline
$250$ & $8$ & $13.59$ & $1.27$ & $2.75\times 10^{-10}$ & $8.14\times 10^{-10}$\\ \hline

$50$ & $12$ & $10.46$ & $1.26$ & $1.02\times 10^{-12}$ & $2.61\times 10^{-12}$\\ \hline
$100$ & $12$ & $10.60$ & $1.51$ & $1.79\times 10^{-10}$ & $3.66\times 10^{-10}$\\ \hline
$150$ & $12$ & $11.25$ & $1.32$ & $5.69\times 10^{-8}$ & $1.80\times 10^{-7}$\\ \hline
$200$ & $12$ & $12.36$ & $1.89$ & $7.91\times 10^{-10}$ & $2.50\times 10^{-9}$\\ \hline
$250$ & $12$ & $11.72$ & $1.49$ & $2.53\times 10^{-12}$ & $3.84\times 10^{-12}$\\ \hline

$50$ & $16$ & $10.10$ & $1.45$ & $1.86\times 10^{-9}$ & $5.77\times 10^{-9}$\\ \hline
$100$ & $16$ & $11.39$ & $1.70$ & $1.37\times 10^{-10}$ & $2.39\times 10^{-10}$\\ \hline
$150$ & $16$ & $11.62$ & $1.78$ & $1.49\times 10^{-11}$ & $4.580\times 10^{-11}$\\ \hline
$200$ & $16$ & $11.88$ & $1.32$ & $1.04\times 10^{-12}$ & $2.09\times 10^{-12}$\\ \hline
$250$ & $16$ & $12.54$ & $1.51$ & $3.41\times 10^{-11}$ & $1.08\times 10^{-10}$\\ \hline

\end{tabular}

  \end{center}

\end{table}







\end{document}